\definecolor{firebrick}{rgb}{0.7, 0.13, 0.13}
\setlist{noitemsep, topsep=5pt,leftmargin=*}
\definecolor{blauw}{RGB}{61,158,255}
\definecolor{donkerblauw}{RGB}{0,0,255}
\definecolor{donkergroen}{RGB}{46,148,0}
\definecolor{donkerrood}{RGB}{204,0,0}
\newcommand\mynobreakpar{\par\nobreak\@afterheading} 
\let\@fnsymbol\@arabic
\newcommand{\N}{\mathbb{N}}
\newcommand{\Z}{\mathbb{Z}}
\newcommand{\C}{\mathbb{C}}
\newcommand{\R}{\mathbb{R}}
\let\OLDthebibliography\thebibliography
\renewcommand\thebibliography[1]{
  \OLDthebibliography{#1}
  \setlength{\parskip}{0pt}
  \setlength{\itemsep}{0pt plus 0.3ex}
}
\newtheorem{theorem}{Theorem}[section]
\newtheorem{lemma}[theorem]{Lemma}
\newtheorem{proposition}[theorem]{Proposition}
\newtheorem{corollary}[theorem]{Corollary}
\theoremstyle{definition}
\newtheorem{examp}[theorem]{Example}
\newtheorem{defn}{Definition}[section]
\newtheorem*{examp*}{Example}
\newtheorem{procedure}{Procedure}[section]
\newtheorem{remark}{Remark}[section]
\theoremstyle{plain}
\newcommand{\T}{^{\sf T}}
\newcounter{thm}[section]
\title{Symmetry reduction to optimize a graph-based polynomial from queueing theory} \date{December 31, 2021}
\author{Sven Polak\thanks{Centrum Wiskunde \& Informatica (CWI), Amsterdam. E-mail: \href{mailto:s.c.polak@cwi.nl}{\texttt{s.c.polak@cwi.nl}}.}}
\begin{document}
\maketitle
\setcounter{footnote}{1}

\noindent \textbf{Abstract.}  For given integers $n$ and $d$, both at least 2, we consider a homogeneous multivariate polynomial $f_d$ of degree $d$ in variables indexed by the edges of the complete graph on $n$ vertices and coefficients depending on cardinalities of certain unions of edges.  Cardinaels, Borst and Van Leeuwaarden  (arXiv:2111.05777, 2021) asked whether $f_d$, which arises in a model of job-occupancy in redundancy scheduling,  attains its minimum over the standard simplex at the uniform  probability vector. Brosch, Laurent and Steenkamp [SIAM J.\ Optim.\ 31 (2021), 2227--2254] proved that $f_d$ is convex over the standard simplex if $d=2$ and $d=3$, implying the desired result for these $d$. 

We give a symmetry reduction to show that for fixed $d$, the  polynomial is convex over the standard simplex (for all $n\geq 2$) if a constant number of constant matrices (with size \emph{and coefficients} independent of $n$) are positive  semidefinite. This result is then used in combination with a  computer-assisted verification to show that the polynomial~$f_d$ is convex  for $d\leq 9$.

\,$\phantom{0}$

\noindent {\bf Keywords:} Redundancy scheduling, power-of-two model, convexity, multivariate polynomial, complete graph, symmetry reduction.

\section{Introduction}
This paper is inspired by a recent paper of Brosch, Laurent and Steen-kamp \cite{fdpoly} and partially answers a question originating from queueing theory asked by Cardinaels, Borst and Van Leeuwaarden~\cite{fdpolyoriginal}. The latter authors asked whether a certain homogeneous  multivariate polynomial of degree~$d$, in variables indexed by the edges of the complete graph on~$n$ vertices, attains its minimum over the standard simplex at the uniform probability vector.  Brosch, Laurent and Steenkamp~\cite{fdpoly} showed that the polynomial is convex over the standard simplex if $d=2$ and $d=3$ (using symmetry properties of the polynomial in combination with results about the Hamming and Johnson schemes), and that this implies the desired result for these~$d$. 

The main contribution of the present paper is a symmetry reduction which reformulates the problem of checking whether~$f_d$ is convex for all~$n$ as a problem depending only on~$d$, fully independent of~$n$. This helps to prove the conjecture for~$d \leq 9$ and is a promising starting point for further work on the conjecture.

Given integers~$n,L \geq 2$, set~$V:=[n] = \{1,\ldots,n\}$ and~$E:=\{e \subseteq V \, : \, |e| =L \}$, so that~$(V,E)$ is the complete $L$-uniform hypergraph on~$n$ elements. Set~$m:=|E|=\tbinom{n}{L}$.
Given an integer~$d \geq 2$, consider the following~$m$-variate polynomial in variables~$x=(x_e \, : \, e \in E)$:
\begin{align} \label{fdeq} 
f_d(x) = \sum_{(e_1,\ldots,e_d) \in E^d}\prod_{i=1}^d \frac{x_{e_i}}{|e_1 \cup \ldots \cup e_i|},
\end{align}
which is homogeneous of degree~$d$. Denote the standard simplex in~$\R^m$ by
$$
\Delta_m := \left\{  x = (x_e)_{e \in E} \in \R^m \,\,:\,\, x \geq 0, \,\, \textstyle\sum_{e \in E} x_e =1  \right\}.
$$
Cardinaels, Borst and Van Leeuwaarden~\cite{fdpolyoriginal} asked whether~$f_d$ attains its minimum over the standard simplex~$\Delta_m$ at the uniform probability vector~$x^* = \tfrac{1}{m}(1,\ldots,1)$, specifically for the case~$L=2$.

Brosch, Laurent and Steenkamp~\cite{fdpoly} showed that~$f_d$ is convex over~$\Delta_m$ if~$d=2$, and also if~$L=2$ and~$d=3$, and they gave numerical evidence for the validity of the claim that~$f_d$ is convex over~$\Delta_m$ for various small values of~$n,d$ and~$L$. They observed that convexity of~$f_d$ implies that~$f_d$ attains its minimum over~$\Delta_m$ at the uniform probability vector~$x^* =\tfrac{1}{m}(1,\ldots,1)$, as~$f_d$ satisfies a certain invariance property under permutations of~$[n]$.

 In this paper we will use a symmetry reduction to reduce the problem to constant size with constant coefficients (independent of~$n$, for fixed~$d$) in combination with a computer-assisted verification to extend the result from~$\cite{fdpoly}$ to~$L=2$ and~$d \leq 9$.
\begin{theorem} \label{theorem2}
For~$d \leq 9$ and~$L=2$, the polynomial~$f_d$ from~\eqref{fdeq} is convex over the standard simplex~$\Delta_m$.  
\end{theorem}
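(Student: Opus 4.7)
The plan is to apply the symmetry reduction that is the main technical contribution of this paper. Convexity of $f_d$ over $\Delta_m$ is equivalent to positive semidefiniteness of the Hessian $H(x) := \nabla^2 f_d(x)$ on the tangent hyperplane $T := \{y \in \R^E : \sum_{e \in E} y_e = 0\}$ for every $x \in \Delta_m$. Since the entries of $H$ are polynomials of degree $d-2$ in $x$ with coefficients determined by the union sizes $|e_1 \cup \cdots \cup e_i|$ appearing in \eqref{fdeq}, this is a matrix-polynomial positivity condition whose raw dimension $m = \binom{n}{2}$ grows with $n$.

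Next, I would exploit the invariance of $f_d$ under the induced action of $S_n$ on $E = \binom{[n]}{2}$, which makes $H$ equivariant: $H(\sigma \cdot x) = P_\sigma H(x) P_\sigma^{-1}$ for the edge-permutation matrix $P_\sigma$. Averaging $H(x)$ over the $S_n$-orbit of $x$ preserves positive semidefiniteness on the invariant subspace $T$, and the averaged matrix block-diagonalizes along the isotypic components of the $S_n$-module $\R^E$. For $L = 2$ this permutation module is multiplicity-free, decomposing into three irreducibles of shapes $(n), (n-1,1), (n-2,2)$ (the eigenspaces of the Johnson scheme $J(n,2)$), so the PSD condition reduces to three simultaneous block conditions. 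Parameterizing both $x$ and the averaged blocks by orbit types of edge configurations on at most $2d$ vertices — of which there are only finitely many, independent of $n$ once $n$ is large enough for all types to be realized — then produces, for each fixed $d$, a constant list of constant-size matrices whose joint positive semidefiniteness is sufficient for convexity of $f_d$ on $\Delta_m$ for all $n \geq 2$.

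The final step is to verify these constant positive-semidefiniteness conditions by computer for each $d \in \{2,\ldots,9\}$. The cases $d = 2, 3$ are already established in \cite{fdpoly}; for $d = 4,\ldots,9$ I would enumerate the relevant edge-configuration isomorphism types, assemble the reduced blocks in exact rational arithmetic from the explicit formulas for the $S_n$-averaged Hessian, and check positive semidefiniteness (e.g.\ via exact Cholesky factorization or by verifying nonnegativity of all leading principal minors of the symbolic matrix, treating the orbit-type frequencies as free nonnegative parameters summing to one). The principal obstacle is twofold: first, the symmetry reduction must be carried out carefully enough that the $n$-dependence genuinely disappears, with small-$n$ cases (where some orbit types degenerate, e.g.\ $n < 2d$) handled separately as finite base cases; and second, the number of isomorphism types of $d$-edge configurations grows rapidly in $d$, so the reduced blocks at $d = 9$ sit near the practical ceiling of exact computer verification, which explains the cutoff in the theorem.
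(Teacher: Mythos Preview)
Your plan has a genuine gap at the averaging step. Averaging $H(x)$ over the $S_n$-orbit of $x$ produces an $S_n$-invariant matrix whose positive semidefiniteness is only a \emph{necessary} consequence of $H(x)\succeq 0$, not a sufficient one: knowing that $\tfrac{1}{|S_n|}\sum_\sigma P_\sigma H(x)P_\sigma^{-1}\succeq 0$ does not let you conclude $H(x)\succeq 0$. Since $\R^E$ is multiplicity-free as an $S_n$-module, your averaged matrix collapses to three scalars per $x$, which is far too little information to certify convexity at every point of the simplex. The $S_n$-equivariance of $H$ only tells you that it suffices to check one point per orbit, but a generic $x$ has trivial stabilizer, so this gives no reduction.

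The paper avoids this by never averaging in $x$. Instead it expands the Hessian in the monomial basis, $H(f_d)(x)=\sum_{\gamma\in\N^m_{d-2}} x^\gamma Q_\gamma$, and proves the stronger statement that every coefficient matrix $Q_\gamma$ is positive semidefinite (since $x^\gamma\geq 0$ on $\Delta_m$, this suffices). A fixed $\gamma$ is supported on a multigraph with $d-2$ edges and some vertex set $[k]$ with $k\leq 2(d-2)$, so $Q_\gamma$ is invariant only under $S_{n-k}$, \emph{not} under $S_n$. The $S_{n-k}$-module $\R^E$ is not multiplicity-free; its isotypic components have multiplicities $\binom{k}{2}+k+1$, $k+1$, $1$, giving three blocks of these constant sizes but with entries that are polynomials in $n$. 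The part you gloss over---making the entries themselves independent of $n$---is precisely the paper's main technical theorem (Theorem~\ref{Antheorem}): after a suitable rescaling and a limit argument, positive semidefiniteness of $Q_{\gamma(n)}$ for all $n\geq k$ is equivalent to positive semidefiniteness of three fixed matrices built from $Q_{\gamma(k+4)}$. One then enumerates all multigraphs on $d-2$ edges (not edge configurations on $2d$ vertices) and checks the resulting constant matrices by computer.
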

\begin{corollary}
For~$d \leq 9$ and~$L=2$, the polynomial~$f_d$ from~\eqref{fdeq} attains its global minimum over the standard simplex~$\Delta_m$ at  the uniform probability vector~$x^* = \tfrac{1}{m}(1,\ldots,1)$. 
\end{corollary}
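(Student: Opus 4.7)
The plan is to deduce the corollary directly from Theorem~\ref{theorem2} via a standard convexity-plus-symmetry (Jensen averaging) argument, which is exactly the reduction observed in~\cite{fdpoly}. So the work is: first invoke convexity, then exploit the $S_n$-invariance of $f_d$, then symmetrize.

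First I would note that $S_n$ acts on $\R^m$ by permuting the coordinates via its natural action on $E = \binom{[n]}{2}$: for $\sigma \in S_n$, define $(\sigma \cdot x)_{\{i,j\}} := x_{\{\sigma^{-1}(i), \sigma^{-1}(j)\}}$. The polynomial $f_d$ is invariant under this action, because the only quantities appearing in~\eqref{fdeq} besides the $x_{e_i}$ themselves are the cardinalities $|e_1 \cup \cdots \cup e_i|$, which depend only on the set-theoretic structure of the tuple $(e_1,\ldots,e_d)$ and not on the labeling of vertices. Consequently, for every $\sigma \in S_n$ and every $x \in \Delta_m$, one has $f_d(\sigma \cdot x) = f_d(x)$, and $\Delta_m$ is preserved by the action. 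Moreover, since $L=2$, the action of $S_n$ on $E$ is transitive, so averaging any $x \in \Delta_m$ over the $S_n$-orbit yields the uniform vector:
\[
\frac{1}{n!} \sum_{\sigma \in S_n} \sigma \cdot x \;=\; x^{\ast} \;=\; \tfrac{1}{m}(1,\ldots,1).
\]

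Next I would take an arbitrary minimizer $\bar{x} \in \Delta_m$ of $f_d$ (which exists by compactness of $\Delta_m$ and continuity of $f_d$), and apply Theorem~\ref{theorem2} together with Jensen's inequality. By the invariance just established, each $\sigma \cdot \bar{x}$ is also a minimizer with $f_d(\sigma \cdot \bar{x}) = f_d(\bar{x})$. By convexity of $f_d$ on $\Delta_m$ (Theorem~\ref{theorem2}) applied to the convex combination $\tfrac{1}{n!}\sum_\sigma \sigma \cdot \bar{x}$,
\[
f_d(x^{\ast}) \;=\; f_d\!\left(\frac{1}{n!} \sum_{\sigma \in S_n} \sigma \cdot \bar{x}\right) \;\leq\; \frac{1}{n!} \sum_{\sigma \in S_n} f_d(\sigma \cdot \bar{x}) \;=\; f_d(\bar{x}).
\]
Since $\bar{x}$ was a global minimizer and $x^\ast \in \Delta_m$, equality must hold, so $x^\ast$ is a global minimizer of $f_d$ on $\Delta_m$, proving the corollary.

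There is essentially no obstacle here beyond Theorem~\ref{theorem2} itself; the symmetrization is routine and the $S_n$-invariance of $f_d$ is transparent from the defining formula~\eqref{fdeq}. The only thing one has to be slightly careful about is the transitivity statement, which requires $L=2$ (so that $S_n$ acts transitively on the set of unordered pairs); this is exactly the regime to which both Theorem~\ref{theorem2} and the corollary are restricted, so it fits together cleanly.
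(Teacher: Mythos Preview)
Your proof is correct and follows exactly the convexity-plus-symmetry (Jensen averaging) route that the paper itself uses, invoking the lemma from~\cite{fdpoly} that convexity of an $S_n$-invariant function forces a minimizer at the unique fixed point~$x^*$. One minor remark: the transitivity of the $S_n$-action on $\binom{[n]}{L}$ holds for every $L$ (any $L$-subset can be mapped to any other by a permutation of~$[n]$), so your final caveat that transitivity ``requires $L=2$'' is unnecessary --- but this does not affect the validity of the argument.
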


In order to prove Theorem~\ref{theorem2}, we first prove the following general new result, which might be applicable more widely. For~$n \in \N$, let~$S_n$ denote the symmetric group on~$n$ elements and set~$[n]:= \{1,\ldots,n\}$. Fix a nonnegative integer~$k$, and let~$S_{n-k}$ denote the subgroup of~$S_n$ consisting of all~$\sigma \in S_n$ with~$\sigma(i)=i$ for all~$i \in [k]$. Then~$S_{n-k}$ acts on~$[n]$, hence on~$\tbinom{[n]}{2}$ via~$\sigma \cdot \{ i,j\} = \{\sigma(i), \sigma(j)\}$ for~$\{i,j\} \in \tbinom{[n]}{2}$ and~$\sigma \in S_{n-k}$. 
\begin{theorem}\label{Antheorem}
Let~$k \geq 0$ be a fixed integer. Suppose that $(A^{(n)})_{n \geq k}$ is a sequence of symmetric matrices such that:
\begin{enumerate}[label=(\roman*)]
 \item\label{1th} $A^{(n)} \in \R^{\tbinom{[n]}{2} \times \tbinom{[n]}{2}}$ for each~$n \geq k$,
\item\label{2th} For all $n,n'\in \N$ with $k \leq n'\leq n$ and all~$e_i, e_j \in \tbinom{[n']}{2}$, it holds that~$A^{(n')}_{e_i,e_j} = A^{(n)}_{e_i,e_j}$,
\item\label{3th} For all~$n \geq k$, the matrix $A^{(n)}$ is invariant under the simultaneous action of~$S_{n-k}$ on its rows and columns.
\end{enumerate}
Then $A^{(n)}$ is positive semidefinite for every~$n \geq k$ if and only if
\begin{align} \label{blockvalue}
a_{(\{k+1,k+2\},\{k+1,k+2\})} - 2  a_{(\{k+1,k+2\},\{k+1,k+3\})} +  a_{(\{k+1,k+2\},\{k+3,k+4\})} \geq 0
\end{align}
 and the two matrices {\small
\begin{align*} 
\setlength\aboverulesep{1pt}\setlength\belowrulesep{1pt}
    \setlength\cmidrulewidth{0.5pt}
\begin{blockarray}{cccc}
 &  \tbinom{k}{2} &   k  & 1 \\
\begin{block}{c(c|c|c)}
   \tbinom{k}{2}&   \left( a_{(e_i,e_j)} \right)_{e_i,e_j \in \tbinom{[k]}{2}} &   \left( a_{(e_i,\{j,k+1\})} \right)_{\substack{e_i \in \tbinom{[k]}{2} \\ j\in [k]  }}   & \left( a_{(e_i,\{k+1,k+2\})} \right)_{\substack{e_i \in \tbinom{[k]}{2}}} \\
\cmidrule{2-4}
 k &  \left( a_{(e_i,\{j,k+1\})} \right)_{\substack{e_i \in \tbinom{[k]}{2} \\ j\in [k]  }}\T     &  (a_{(\{i,k+1\},\{j,k+2\})})_{i,j\in [k]}  &(a_{(\{i,k+1\},\{k+2,k+3\})})_{i \in [k]}  \\   \cmidrule{2-4}
  1&    \left( a_{(e_i,\{k+1,k+2\})} \right)_{\substack{e_i \in \tbinom{[k]}{2}}} \T  &  (a_{(\{i,k+1\},\{k+2,k+3\})})_{i \in [k]}\T   &  a_{(\{k+1,k+2\},\{k+3,k+4\})} \\
\end{block}
\end{blockarray}
\end{align*}}
and
{\footnotesize
\begin{align*} 
\setlength\aboverulesep{1pt}\setlength\belowrulesep{1pt}
    \setlength\cmidrulewidth{0.5pt}
\begin{blockarray}{ccc}
 &   k&   1 \\
\begin{block}{c(c|c)}
 k&    \left(a_{(\{i,k+1\},\{j,k+1\})}  -   a_{(\{i,k+1\},\{j,k+2\})}\right)_{i,j \in [k]} &    \left( a_{(\{i,k+1\},\{k+1,k+2\})} - a_{(\{i,k+1\},\{k+2,k+3\})}  \right)_{i \in [k]} \\\cmidrule{2-3}
  1& \left( a_{(\{i,k+1\},\{k+1,k+2\})} - a_{(\{i,k+1\},\{k+2,k+3\})}  \right)_{i \in [k]}\T  &      a_{(\{k+1,k+2\},\{k+1,k+3\})} - a_{(\{k+1,k+2\},\{k+3,k+4\})}  \\  
\end{block}
\end{blockarray} 
\end{align*}}
are positive semidefinite. Here we write~$a_{(e_i,e_j)}$ for the~$(e_i,e_j)$-th entry of~$A^{(k+4)}$, for~$e_i, e_j \in \tbinom{[m]}{2}$ where~$m \leq k+4$.
\end{theorem}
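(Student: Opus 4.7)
The plan is to apply a symmetry reduction to $A^{(n)}$ using the $S_{n-k}$-action of condition~(iii), and then carefully track how the resulting reduced blocks depend on $n$. As an $S_{n-k}$-representation, $\R^{\binom{[n]}{2}}$ splits equivariantly along the three subsets $B_0 := \binom{[k]}{2}$ (trivial action), $B_1 := \{\{j,a\} : j \in [k],\, a \in [n]\setminus[k]\}$ ($k$ copies of the permutation representation of $S_{n-k}$ on $[n-k]$), and $B_2 := \binom{[n]\setminus[k]}{2}$ (the permutation representation on $\binom{[n-k]}{2}$). Decomposing into Specht modules with $m := n-k$, the total multiplicities of $V_{(m)}$, $V_{(m-1,1)}$, and $V_{(m-2,2)}$ in $\R^{\binom{[n]}{2}}$ are $\binom{k}{2}+k+1$, $k+1$, and $1$ respectively. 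By Schur's lemma, $A^{(n)}$ block-diagonalises into three reduced blocks $M_0^{(n)}$, $M_1^{(n)}$, $M_2^{(n)}$ of exactly these sizes, and $A^{(n)}$ is positive semidefinite if and only if all three are.

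I would compute these blocks in concrete symmetry-adapted bases. For $M_0^{(n)}$, take the orbit-indicator vectors $e_{\{i,j\}}$ ($\{i,j\} \in \binom{[k]}{2}$), $\chi_j := \sum_{a \in [n]\setminus[k]} e_{\{j,a\}}$ ($j \in [k]$), and $\chi_\ast := \sum_{\{a,b\} \subseteq [n]\setminus[k]} e_{\{a,b\}}$, each rescaled by the inverse of its orbit size. For $M_1^{(n)}$, take the equivariant images $\phi_j(\xi) := e_{\{j,k+1\}} - e_{\{j,k+2\}}$ ($j \in [k]$) and $\phi_{B_2}(\xi) := y_{k+1} - y_{k+2}$ of the vector $\xi := e_{k+1}-e_{k+2} \in V_{(m-1,1)}$, where $y_a := \sum_{b \in [n]\setminus([k]\cup\{a\})} e_{\{a,b\}}$. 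For $M_2^{(n)}$ (a scalar), evaluate the quadratic form at the test vector $w := e_{\{k+1,k+3\}} - e_{\{k+1,k+4\}} - e_{\{k+2,k+3\}} + e_{\{k+2,k+4\}}$, which one checks is orthogonal to every $y_a$ and hence lies in the $V_{(m-2,2)}$-isotypic. By conditions~(ii) and~(iii), every resulting matrix entry becomes an explicit integer-linear combination of the orbit values $a_{(e_i,e_j)}$.

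The structural identity that drives the proof is that, after one further positive-diagonal conjugation on $M_1^{(n)}$, the three blocks decompose as ``limit matrix $+$ an explicit positive semidefinite correction depending only on the other two blocks.'' Writing $s$ for the left-hand side of~\eqref{blockvalue}, $N_\infty$ for the second matrix of the theorem, $M_0^\infty$ for the first matrix, and $E_{\ast\ast}$ for the matrix with a $1$ in the last diagonal position and $0$'s elsewhere, the calculation produces
\[
M_2^{(n)} \;=\; s, \qquad M_1^{(n)} \;=\; N_\infty + \tfrac{s}{n-k-2}\, E_{\ast\ast},
\]
and, viewing the last two summands below as embedded in the lower-right $(k+1) \times (k+1)$ corner of the ambient space,
\[
M_0^{(n)} \;=\; M_0^\infty \;+\; \tfrac{1}{n-k}\, D\, N_\infty\, D \;+\; \tfrac{2\, s}{(n-k)(n-k-1)}\, E_{\ast\ast},
\]
with $D := \mathrm{diag}(I_k, 2)$. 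The ``if'' direction then follows immediately: under the three stated conditions every correction matrix is positive semidefinite, so each $M_i^{(n)}$ is positive semidefinite for $n \geq k+4$; the cases $k \leq n \leq k+3$ reduce to principal submatrices via condition~(ii). For the ``only if'' direction, letting $n \to \infty$ in each block and using that the positive semidefinite cone is closed yields $M_0^\infty, N_\infty \succeq 0$, while $s \geq 0$ is exactly the condition $M_2^{(n)} \geq 0$.

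The main technical obstacle is the bookkeeping in the second step: the naive orbit-indicator basis produces entries that are $n$-dependent rational combinations of the $a_{(e_i,e_j)}$, and it is not a priori obvious that one can regroup the $1/(n-k)$- and $1/((n-k)(n-k-1))$-terms so that their coefficients are \emph{exactly} entries of $N_\infty$ or the scalar $s$. A related subtlety is that the natural off-diagonal block of $M_1^{(n)}$ acquires a factor of $\sqrt{n-k-2}$, which has to be absorbed by a positive-diagonal conjugation before the clean $N_\infty + \tfrac{s}{n-k-2}\, E_{\ast\ast}$-form emerges. Once these identities are verified by direct computation, the decoupled conditions of the theorem drop out.
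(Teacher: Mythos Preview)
Your proposal is correct and follows essentially the same route as the paper's own proof: decompose $\R^{\binom{[n]}{2}}$ into the three $S_{n-k}$-isotypic components with multiplicities $\binom{k}{2}+k+1$, $k+1$, $1$, compute the three reduced blocks in explicit symmetry-adapted bases, conjugate by a positive diagonal to strip off the leading powers of $n-k$, and then observe the exact decomposition of each block as the stated limit matrix plus a correction that is manifestly positive semidefinite once the other two conditions hold. The only cosmetic differences are that the paper uses a slightly different test vector $v_{\{k+1,k+2\}}-v_{\{k+1,k+3\}}-v_{\{k+2,k+4\}}+v_{\{k+3,k+4\}}$ in the $V_{(m-2,2)}$-block (yielding the same scalar $4s$ as your $w$) and performs the diagonal rescaling after computing $U_i^{\sf T} A^{(n)} U_i$ rather than building it into the basis vectors; your identities $M_1^{(n)}=N_\infty+\tfrac{s}{n-k-2}E_{\ast\ast}$ and $M_0^{(n)}=M_0^\infty+\tfrac{1}{n-k}DN_\infty D+\tfrac{2s}{(n-k)(n-k-1)}E_{\ast\ast}$ match the paper's equations exactly.
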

So if $(A^{(n)})_{n \geq k}$ is a sequence of matrices satisfying the theorem, then~$A^{(n)}$ is positive semidefinite for every~$n \geq k$ if and only if three matrices with size \emph{and coefficients} independent of~$n$ are positive semidefinite.
Note that~$A^{(n)}$ for~$n \geq k+4$ contains as many distinct values as~$A^{(k+4)}$, as each~$\omega \in(E \times E)/S_{n-k}$ contains a pair of edges~$(e,f) \in \omega$ with~$e,f \subseteq [k+4]$. This is why the constant matrices are constructed from~$A^{(k+4)}$.

The proof of Theorem~$\ref{Antheorem}$ (cf. Section~\ref{bigproof}) consists of the following ingredients.
\begin{enumerate}[label=(\alph*)]
\item  We use the fact that a matrix~$A^{(n)}$ satisfying conditions \ref{2th},\ref{3th} is a morphism of representations $\mathbb{R}^E \to \mathbb{R}^E$, and therefore orthogonally block-diagonalizable once a decomposition of $\mathbb{R}^E$ into $S_{n-k}$-irreducible submodules is available.
\item We compute an explicit decomposition of $\mathbb{R}^E$ into~$S_{n-k}$-irreducibles (this is Proposition~\ref{u1u2u3}). We do this from first principles and without relying on Young-tableaux. This results in a block-diagonalization of~$A^{(n)}$ into matrices of constant size \emph{but coefficients dependent on~$n$}.
\item We apply a sequence of combinatorial operations preserving positive semidefiniteness (such as dividing or multiplying rows and columns by the same constant) to the resulting three matrices to decompose them into a sum of matrices in a convenient way. Then we use a limit argument (letting~$n \to\infty$) so that we arrive at matrices with coefficients independent of~$n$. This part of the proof is crucial to eliminate the dependence on~$n$ in the resulting three matrices, and is the key to proving Theorem~\ref{Antheorem}.

This step is inspired by the necessary condition that~$A^{(n)} \succeq  0$ for \emph{large}~$n$, if $A^{(n)}\succeq 0$ for all~$n\geq k$. (Here we write~$A^{(n)}\succeq 0$ if~$A^{(n)}$ is positive semidefinite.) This observation results in a necessary condition that three constant `limit' matrices are positive semidefinite. It will turn out that positive semidefiniteness of these constant matrices is also sufficient to show~$A^{(n)} \succeq 0$  for all~$n \geq k$.
\end{enumerate}

\begin{remark} \label{k0remark}
Theorem~$\ref{Antheorem}$ in particular also holds for~$k=0$. Let~$(A^{(n)})_{n \geq 0}$ be a sequence of matrices satisfying the conditions of the theorem.
Then we have
\begin{align*}{\footnotesize
A^{(4)} = 
\begin{blockarray}{ccccccc} 
& \{1,2\} &  \{1,3\} &  \{1,4\} &  \{2,3\} &  \{2,4\} &  \{3,4\}  \\
\begin{block}{c(cccccc)}
  \{1,2\} &x & y & y & y  & y &z  \\
  \{1,3\} &y & x &y &  y & z &  y\\
  \{1,4\} &y & y &x  & z  & y &  y\\
  \{2,3\} &y & y & z & x   & y &  y\\
  \{2,4\} &y & z & y &  y &x  &  y\\
  \{3,4\} & z& y & y &  y &y  & x \\
\end{block}
\end{blockarray},}
\end{align*}
\noindent for some~$x,y,z \in \R$. The theorem states in this case that~$A^{(n)} \succeq 0$ for all~$n \geq 0$ if and only if~$x-2y+z\geq 0$,~$z \geq 0$ and~$y-z \geq 0$. 

Note that~$A^{(4)}$ has eigenvalues $x-z$ (multiplicity 3, eigenvectors~$v_{e_i} - v_{e_j}$ for the pairs of edges~$e_i,e_j$ with~$e_i \cap e_j = \emptyset$, with~$v_{e_i}$  the characteristic vector in~$\R^E$ with a~$1$ in position~$e_i$ and zeros elsewhere), $x+4y+z$ (multiplicity~$1$, eigenvector the all-ones vector) and~$x-2y+z$ (multiplicity 2, eigenvectors $(0, 1, -1,-1,1,0)\T$ and $(1,0,-1,-1,0,1)\T$). 

These eigenvalues are all at least zero if~$x-2y+z \geq 0$, $z\geq 0$, and $y-z \geq 0$, as is easy to check (and as follows from the theorem). The converse does not hold: for, e.g., $(x,y,z)=(3,1,2)$ all eigenvalues of~$A^{(4)}$ are positive, but~$y-z <0$. So this shows that~$A^{(4)} \succeq 0$ is necessary but not sufficient to conclude that~$A^{(n)} \succeq 0$ for all~$n \in \N$. 
\end{remark} 

We now briefly sketch how we use Theorem~\ref{Antheorem}  to prove Theorem~\ref{theorem2}. Details will be given in Section~\ref{howtouse}. 
We start in the same way as Brosch, Laurent and Steenkamp \cite{fdpoly}, by expressing the Hessian of~$f_d$ as a matrix polynomial of degree~$d-2$ in the standard monomial basis. Each coefficient matrix~$Q_{\gamma(n)} \in \R^{E \times E}$ can be seen to  depend on~$n$ and a multigraph with~$d-2$ fixed edges which is independent of~$n$. In order to prove convexity of~$f_d$ over~$\Delta_m$, it suffices to show that each of these matrices~$Q_{\gamma(n)}$ is positive semidefinite. 

To do this, we enumerate for a fixed~$d$ all multigraphs on~$d-2$ edges up to isomorphism. Such graph has $k \leq 2(d-2)$ vertices. For each multigraph, we construct a sequence~$(A^{(n)})_{n \geq k}$ (with~$A^{(n)} =Q_{\gamma(n)}$) which satisfies the conditions of Theorem~\ref{Antheorem}. With this theorem, we verify positive semidefiniteness of~$Q_{\gamma(n)}$, for all~$n \geq k$. After performing this verification for each multigraph with~$d-2$ edges up to isomorphism, we may conclude that the Hessian of~$f_d$ is positive semidefinite over~$\Delta_m$ (i.e.,~$f_d$ is convex over~$\Delta_m$)  \emph{for all~$n$}, for this fixed~$d$. We follow the described procedure for each fixed~$d \leq 9$ to exhibit the result in Theorem~\ref{theorem2}.

\subsection{Motivation}

The polynomial~$f_d$ arises naturally in a model of job scheduling with redundancy. The question whether the polynomial~$f_d$ is minimized at the uniform probability vector (in particular for~$L=2$) was posed by Cardinaels, Borst and Van Leeuwaarden~\cite{fdpolyoriginal} and answered affirmatively for~$d=2$ (for all~$L$) and~$d=3$ (for~$L=2$) by Brosch, Laurent and Steenkamp~\cite{fdpoly}.  Here we  give a sketch of the application in queueing theory, to motivate why we are studying this polynomial. For a detailed explanation about the model and about the continuous-time Markov chain setting (constituting the queueing theory framework) we refer to~\cite{stationary}, and for other details about the question about~$f_d$ we refer to~\cite{fdpolyoriginal}.

Suppose there are~$n$ parallel servers, which process jobs with speed~$\mu$. Jobs arrive as a Poisson process of rate $n \lambda$, for some~$\lambda >0$. When a job arrives,~$L$ replicas of the same job are sent with probability~$x_e$ to a subset~$e \subseteq [n]$ of~$L$ servers. When one of the $L$ replicas of a job finishes on a server, the replicas of it on the other servers are instantly abandoned. The servers process the jobs on a first come, first serve basis.

In the literature it is often assumed that the set of servers to which replicas of a job are sent is chosen uniformly at random. By contrast, Cardinaels, Borst and Van Leeuwaarden~\cite{fdpolyoriginal} investigate the impact of selecting the set of servers according to a specified probability distribution~$(x_e)_{e \in E}$. 
 They show that in the heavy-traffic regime (i.e., $\lambda \uparrow \mu$), this impact is relatively limited. However, in the light-traffic regime (i.e., $\lambda \downarrow 0$), the system occupancy is considerably more sensitive to the choice of the probability distribution~$(x_e)_{e \in E}$. They especially consider the case~$L=2$ in the light-traffic regime. The special case that the edge probabilities are uniform corresponds to the \emph{commonly considered power-of-two policy}.

As observed in~\cite{stationary}, the state (total occupancy) of the system at time~$t$ can be denoted by a single queue, i.e., as a vector $(e_1,\ldots,e_M) \in E^M$ where~$M=M(t)$ denotes the total number of jobs in the system, and~$e_i \in E$ is the set of servers to which copies of the~$i$-th oldest job have been assigned. So the newest job corresponds to~$e_M$. An example is given in Figure~\ref{examplefigure}.

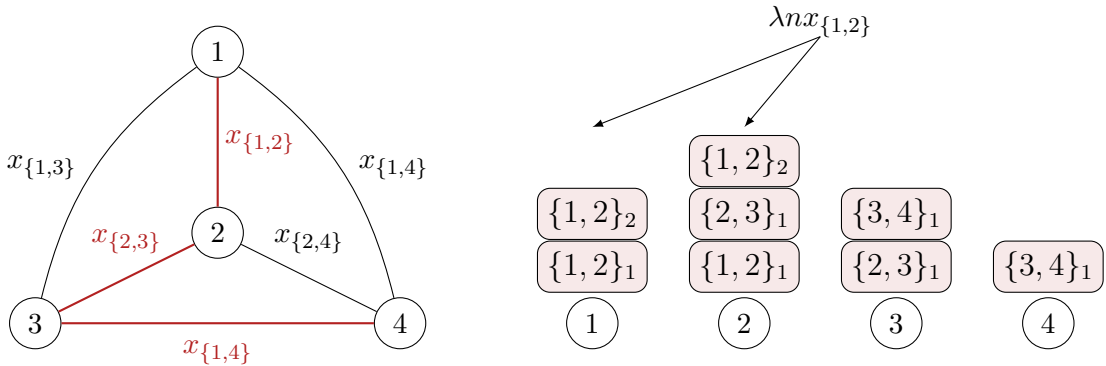
\begin{figure}[ht]
\centering
\begin{subfigure}[b]{0.49\textwidth}
\centering
\begin{tikzpicture}  [scale=1.2,auto=center,every node/.style={circle,draw=black},baseline=0pt] 
  \node (1) at (2,3)  {1}; 
  \node (2) at (2,1)  {2}; 
  \node (3) at (0,0)  {3}; 
  \node (4) at (4,0)  {4}; 
\path[-] 
(1)  edge [firebrick,thick]  node[right=-3,scale=1,draw=none]  {$x_{\{1,2\}}$}         (2)
(1)  edge [black,bend right=20]  node[left=1.5,scale=1,draw=none]  {$x_{\{1,3\}}$}         (3)
(1)  edge [black,bend left=20]  node[right=1.5,scale=1,draw=none]  {$x_{\{1,4\}}$}         (4)
(3)  edge [firebrick,thick]  node[below=-8,scale=1,draw=none]  {$x_{\{1,4\}}$}         (4)
(2)  edge [firebrick,thick]  node[above=-5,scale=1,draw=none]  {$x_{\{2,3\}}$}         (3)
(2)  edge [black]  node[above=-5,scale=1,draw=none]  {$x_{\{2,4\}}$}         (4);
\end{tikzpicture}
\end{subfigure}\vspace{-15pt}
\begin{subfigure}[t]{0.49\textwidth}
  \centering       
\begin{tikzpicture}[>=latex,scale =1,baseline=0pt] 
\tikzstyle{vertex} = [circle,draw=black, fill = none,scale = 1]
\tikzstyle{job} = [scale=1,rectangle,draw=black,fill = firebrick!10, rounded corners=1ex]
\node[vertex] (v1) at (-3,0) {1};
\node[vertex] (v2) at (-1,0) {2};
\node[vertex] (v3) at (1,0) {3};
\node[vertex] (v4) at (3,0) {4};
\begin{scope}[shift={(-3,0.75)}]
\node[job] at (0,0) {\large{$\{1,2\}_1$}};
\node[job] at (0,0.7) {\large $\{1,2\}_2$};
\end{scope}
\begin{scope}[shift={(-1,.75)}]
\node[job] at (0,0.0) {\large{$\{1,2\}_1$}};
\node[job] at (0,0.7) {\large $\{2,3\}_1$};
\node[job] at (0,1.4) {\large $\{1,2\}_2$};
\end{scope}
\begin{scope}[shift={(1,0.75)}]
\node[job] at (0,0.0) {\large{$\{2,3\}_1$}};
\node[job] at (0,0.7) {\large $\{3,4\}_1$};
\end{scope}
\begin{scope}[shift={(3,0.75)}]
\node[job] at (0,0.0) {\large{$\{3,4\}_1$}};
\end{scope}
\draw[->] (0,3.8) --(-3,2.6);
\draw[->] (0,3.8) -- (-1,2.6);
\node at (0,4) {$\lambda n x_{\{1,2\}}$};
\end{tikzpicture}
\end{subfigure}
\caption{\small On the left, the complete graph on~$n=4$ vertices is shown. The $4$ vertices represent the servers and the edges represent the possible pairs of servers to which  job replicas are sent. On the right, the server occupancy for the state~$(\{1,2\}, \{2,3\},\{3,4\},\{1,2\})$ is presented. The~$k$-th arrival of a job which is sent to servers~$\{i,j\}$ is denoted by~$\{i,j\}_k$ (cf.\ Figure~$1$ of~\cite{fdpolyoriginal}), so we can write the above state also as $(\{1,2\}_1, \{2,3\}_1,\{3,4\}_1,\{1,2\}_2)$.}\label{examplefigure}
\end{figure}

It was proved in~\cite{stationary} that if the service times of the jobs are independently and exponentially distributed with unit mean, under a certain necessary and sufficient condition for stability of the system (Theorem 1 of~\cite{stationary}, see also equation~(2) of~\cite{fdpolyoriginal}), and assuming all servers have the same speed~$\mu$,  the limiting (stationary) probability of being in state $(e_1,\ldots,e_M) \in E^M$ equals
\begin{align}
\pi(e_1,\ldots,e_M)= C \prod_{i=1}^M \frac{n \lambda x_{e_i}}{\mu |e_1 \cup \ldots \cup e_i| },
\end{align} 
for a normalization constant~$C >0$. 
The authors of~\cite{stationary} emphasize that~$\pi(e_1,\ldots,e_M)$ cannot be written as a product of independent per-server or per-edge terms, as the denominator for the~$i$-th job depends on all jobs already in the system, i.e., on~$e_1,\ldots,e_i$. 

Now let~$Q_{\lambda}(x)$ denote a random variable with the stationary distribution of the number of jobs in the system with edge selection probabilities~$x=(x_e)_{e \in E}$. Then 
\begin{align}
\mathbb{P}(Q_{\lambda}(x)=d) = \sum_{(e_1,\ldots,e_d) \in E^d} \pi (e_1,\ldots,e_d),
\end{align} 
which is equal to~$f_d(x)$ up to a scalar multiple. The authors of~\cite{fdpolyoriginal} specifically consider the two-server light traffic regime, i.e., the case that $L=2$ and~$\lambda \downarrow 0$. They show that in this case $\mathbb{P}(Q_{\lambda}(x)=0) = C = 1 +o(1)$, and 
\begin{align}
\mathbb{P}(Q_{\lambda}(x) \geq d) = \left( \frac{n\lambda}{\mu }\right)^d f_d(x) + o(\lambda^d), 
\end{align} 
Hence, letting ~$x^*=(1,\ldots,1)/|E|$ denote the uniform probability vector, one has
\begin{align} \label{motivationlimit}
\lim_{\lambda \downarrow 0} \frac{\mathbb{P}(Q_{\lambda}(x^*) \geq d ) }{\mathbb{P}(Q_{\lambda}(x) \geq d )} =\lim_{\lambda \downarrow 0} \frac{ f_d(x^*) + o(1) }{ f_d(x) + o(1) }.
\end{align}
 So if the polynomial~$f_d$ attains its minimum at the uniform probability vector~$x^*$, then the limit in~\eqref{motivationlimit} is at most~$1$. This means that in this case in the light-traffic regime, to minimize the system occupancy, one should select the assignments  of the job replicas to the servers uniformly at random. This motivates the question to show that the polynomial~$f_d$ attains its minimum over the standard simplex at the uniform distribution~$x^*$. 

It was proved by Brosch, Laurent and Steenkamp that for~$d \leq 3$ the statement holds~\cite{fdpoly}, and the results of the current paper imply that it is true for~$d \leq 9$.

\subsection{Background: exploiting the symmetry of the problem}
To show convexity of~$f_d$ for~$d \leq 9$ we will exploit its symmetry properties.  Brosch, Laurent and Steenkamp also used the symmetry of~$f_d$ in~\cite{fdpoly}. They started with the following observation, based on symmetry.
\begin{lemma}[\cite{fdpoly}]
For any~$L,d,n\geq 2$, if the polynomial~$f_d$ from~\eqref{fdeq} is convex over the standard simplex~$\Delta_m$, then it attains its global minimum over~$\Delta_m$ at  the uniform probability vector~$x^* = \tfrac{1}{m}(1,\ldots,1)$. 
\end{lemma}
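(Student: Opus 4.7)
The plan is to exploit the fact that~$f_d$ is invariant under the natural action of the symmetric group~$S_n$ on the edges~$E$, and to combine this invariance with convexity via a standard symmetrization (Jensen-style) argument.

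First, I would verify the invariance. For~$\sigma\in S_n$ and~$x\in\R^m$, define~$\sigma\cdot x\in\R^m$ by~$(\sigma\cdot x)_e:=x_{\sigma^{-1}(e)}$. Re-indexing the outer sum in~\eqref{fdeq} by~$(\sigma^{-1}(e_1),\ldots,\sigma^{-1}(e_d))$ would then show that~$f_d(\sigma\cdot x)=f_d(x)$ for every~$\sigma\in S_n$ and every~$x\in\R^m$, since each factor~$|e_1\cup\cdots\cup e_i|$ is unchanged when all the~$e_j$ are simultaneously permuted by~$\sigma$.

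Next, given any~$x\in\Delta_m$, I would consider the symmetrized vector
\[
\bar x \ :=\ \frac{1}{n!}\sum_{\sigma\in S_n}\sigma\cdot x.
\]
Since~$\Delta_m$ is convex and~$S_n$-invariant, $\bar x\in\Delta_m$. Because~$S_n$ acts transitively on~$E$ (any two~$L$-element subsets of~$[n]$ are related by a permutation of~$[n]$), all coordinates of~$\bar x$ coincide, so~$\bar x=x^*$. Combining this equality with the convexity of~$f_d$ on~$\Delta_m$ and the invariance from the previous step yields
\[
f_d(x^*)\ =\ f_d(\bar x)\ \leq\ \frac{1}{n!}\sum_{\sigma\in S_n} f_d(\sigma\cdot x)\ =\ f_d(x),
\]
and since~$x\in\Delta_m$ was arbitrary, this is the desired conclusion.

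The argument has no real obstacle: the~$S_n$-invariance of~$f_d$ is immediate from the form of~\eqref{fdeq}, transitivity of the action of~$S_n$ on~$L$-subsets is standard, and Jensen's inequality supplies the rest. The only routine point worth spelling out is that~$\bar x\in\Delta_m$, which follows because~$\Delta_m$ is convex and~$S_n$-invariant. This is precisely the reason that the rest of the paper focuses on the (a priori stronger) property of convexity: once convexity is established, the minimizer statement is automatic from symmetry.
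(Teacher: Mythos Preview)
Your argument is correct and is essentially the same as the paper's: the paper notes that a convex $G$-invariant function has a $G$-fixed minimizer, and that $x^*$ is the unique $S_n$-fixed point of $\Delta_m$, which is exactly what your symmetrization/Jensen argument establishes in detail.
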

 To see this, note that if $f$ is convex and $f(g\cdot x) = f(x)$ for all $g\in G$, then $f$ has a minimizer that satisfies $g \cdot x = x$ for all $g\in G$. In the present setting, $f_d$ is invariant under the action of $S_n$ and~$x^*$ is the only fixed point in~$\Delta_m$ of the action of $S_n$. 

Brosch, Laurent and Steenkamp~\cite{fdpoly} first considered a related symmetric polynomial, defined as follows (again for integers~$n,d,L \geq 2$):
\begin{align}\label{pd}
p_d(x) := \sum_{(e_1,\ldots,e_d) \in E^d} \frac{1}{|e_1 \cup \ldots \cup e_d|}x_{e_1}\cdots x_{e_d}.
\end{align}
They proved that~$p_d$ is convex for all~$n,d \geq 2$ and for all edge cardinalities~$L$, and that convexity implies that~$p_d$ attains its minimum at the uniform probability vector. They observed that~$p_2 = L f_2$, and hence also proved that~$f_2$ is convex for all edge cardinalities~$L$. The polynomials~$p_d$ are interesting in themselves, and helped to give insight in the question of Cardinaels, Borst and Van Leeuwaarden about~$f_d$.

Symmetry is a frequently used tool in semidefinite optimization. We give some background and references. The starting point is Wedderburn-Artin theory (cf.~\cite{wedderburn}), from which it follows that matrix~$*$-algebras can be block-diagonalized. The technique we will use to obtain the first part of our symmetry reduction~(Proposition~\ref{poslemma} below) is a special case of this.  
The linear programming bound by Delsarte~\cite{delsarte} is an early example of the usage of symmetry, which bound was discovered to be a symmetry reduced (and slightly strengthened) version of Lov\'asz's theta-function (cf.~\cite{schrijverdelsarte}). 

Group symmetry can be used to reduce the size of matrices in semidefinite programs. This was studied in landmark papers by Kanno, Ohsaki, Murota and Katoh~\cite{kanno}  and Gatermann and Parrilo~\cite{gatpar}.   For an exposition about the usage of symmetry in semidefinite programming, we refer to~\cite{invsemi}. Examples of concrete areas of application are crossing numbers of complete bipartite graphs (cf.~\cite{regular}), approximations for quadratic assignment problems (cf.~\cite{quad}) and truss topology optimization (cf.~\cite{truss}).

Another specific area of application is coding theory, in which several block-diagonalizati-ons have been developed to compute semidefinite bounds based on tuples of codewords. This approach was pioneered by Schrijver~\cite{schrijver}. At a fixed level in the hierarchy, these bounds can be computed in polynomial time, cf.\ Laurent~\cite{laurent}. In earlier works, direct and analytical derivations of the reductions of the involved algebras (the Terwiliger algebras of the Hamming and Johnson schemes) were given~\cite{tanaka,quadruples, laurent, schrijver, vallentin}. The Terwilliger algebra of the Hamming scheme is the algebra of $S_n$-invariant matrices in~$\C^{2^n \times 2^n}$ indexed by the collection of all subsets of~$[n]$. The algebra of~$S_n$-invariant matrices indexed by~$\tbinom{[n]}{L}$ for a fixed~$L$ is called the Terwilliger algebra of  the Johnson scheme. We consider~$L=2$, but our matrices are only symmetric under~$S_{n-k}$ (for~$n\geq k$), where~$k$ is fixed. Brosch, Laurent and Steenkamp~\cite{fdpoly} used the known block-diagonalizations of the Terwilliger algebras as a tool  in their proof of convexity of~$p_d$ (for all~$d$) and of~$f_d$ for~$d\leq 3$. 

More recently, in coding theory an approach based on elementary representation theory ---with as main element an explicit decomposition into irreducible submodules of a certain associated vector space--- has been used to obtain symmetry reductions of semidefinite programs for finding upper bounds on the cardinalities of several types of codes~\cite{gijswijt, artikel, mixed, cw4, leeartikel} (see~\cite{proefschrift} as background for a unified treatment).
In the present work, we will use this approach based on representation theory of finite groups to obtain the first part of the reduction. 

Symmetry and representation theory have extensively been used as a tool to answer algebraic questions about polynomials. With regard to semidefinite programming, the central question is then whether certain symmetric polynomials can be written as sums of squares. We point the reader also to references~\cite{gatpar, thomas, thomas2,riener} about finding these sum of squares certificates for symmetric polynomials. In particular, symmetric sums of squares (in~$n$ variables under the action of~$S_n$) have been characterized~\cite{blekherman}. The recent works~\cite{thomas, thomas2} consider symmetric polynomials with variables indexed by the $L$-subsets (of~$n$) hypercube. In the mentioned references a main focus is to show that, using representation theory of the symmetric group (cf.~\cite{sagan}), the sizes of the involved semidefinite programs can be made independent of~$n$. However, the number $n$ still might appear in the coefficients of the reduced matrices. The authors also give some examples of asymptotic results and find links to Razborov's theory of flag algebras~\cite{razborov}.

In the present paper the main tool (given in Theorem~\ref{Antheorem}) is a symmetry reduction to show that certain symmetric matrices are positive semidefinite for all~$n$ if and only if three matrices with order \emph{and coefficients} independent of~$n$ are positive semidefinite.

\subsection{Outline of the paper}
In Section~\ref{howtouse}, we explain how  Theorem~\ref{Antheorem} will be used to show Theorem~\ref{theorem2}.  In Section~\ref{preliminaries} we give the necessary preliminaries to prove Theorem~\ref{Antheorem}. In Section~\ref{bigproof} we proceed to give the proof of Theorem~\ref{Antheorem}, which is the most important part of this paper. Afterwards we prove in Section~\ref{smallpropproof} the following small proposition (here we fix a sequence $\gamma \in \N_{d-2}^m$, as in the previous section):
\begin{proposition} \label{drieprop}
Let~$(A^{(n')})_{n' \geq k}$ be the sequence of matrices corresponding to~$\gamma$. Then
$$
a_{(\{k+1,k+2\},\{k+1,k+2\})} - 2  a_{(\{k+1,k+2\},\{k+1,k+3\})} +  a_{(\{k+1,k+2\},\{k+3,k+4\})} \geq 0,
$$
where we write~$a_{(e_i,e_j)}$ for the~$e_i,e_j$-th entry of~$A^{(k+4)}$, for~$e_i, e_j \in \tbinom{[k+4]}{2}$.
\end{proposition}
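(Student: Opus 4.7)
My plan is to unfold each of the three entries of $A^{(k+4)}$ as an explicit sum over ordered $d$-tuples of edges, cast the three sums into a common parametrization, and reduce the inequality to an application of AM--GM combined with the midpoint convexity of $t \mapsto 1/t$. By property~\ref{2th} of Theorem~\ref{Antheorem}, all three relevant entries already appear inside $A^{(k+4)}$, so I may work throughout with the ambient vertex set $[k+4]$.

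Writing $f_d(x) = \sum_\gamma a_\gamma x^\gamma$ with
\[
a_\gamma = \sum_{\substack{(e_1, \ldots, e_d) \in E^d \\ \text{multiset}\,=\,\gamma}} \prod_{i=1}^d \frac{1}{|e_1 \cup \cdots \cup e_i|},
\]
and using that $Q_{\gamma(n)}$ is the $x^\gamma$-coefficient of $\nabla^2 f_d$, a routine evaluation of $\partial^2/\partial x_e \partial x_f$ shows that, for $e, f$ not appearing in $\gamma$,
\[
(Q_{\gamma(n)})_{e,e} = 2\, a_{\gamma + 2\mathbf{1}_e}, \qquad (Q_{\gamma(n)})_{e,f} = a_{\gamma + \mathbf{1}_e + \mathbf{1}_f} \quad (e \ne f),
\]
where $\mathbf{1}_e$ is the unit multi-index at $e$. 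I then parametrize each such coefficient by (i) a pair of positions $1 \le p < q \le d$ holding the two added edges, (ii) an ordering of the $d-2$ edges of $\gamma$ in the remaining positions, and (iii) in the off-diagonal cases, a choice of which of $e, f$ sits at $p$. Let $v_j$ denote the cumulative vertex count of the $\gamma$-edges placed in positions $\le j$ (so $v_j \le k$). Because $e$ and each candidate $f$ are disjoint from $[k]$, the cumulative union $u_j := |e_1 \cup \cdots \cup e_j|$ equals $v_j$ for $j < p$ and $v_j + 2$ for $p \le j < q$ across all three cases; for $j \ge q$, it equals $v_j + 2$, $v_j + 3$, or $v_j + 4$ in the diagonal, shared-vertex, and vertex-disjoint cases respectively. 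Crucially, both choices in (iii) give identical factors $1/u_j$, so each off-diagonal sum acquires a combinatorial factor $2$ that exactly matches the factor $2$ already present in $2\, a_{\gamma + 2 \mathbf{1}_e}$.

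Collecting terms and factoring out the common positive prefix $R := \prod_{j<p} v_j^{-1} \cdot \prod_{p \le j < q} (v_j + 2)^{-1}$, the left-hand side of Proposition~\ref{drieprop} equals
\[
2 \sum_{1 \le p < q \le d}\, \sum_{\text{ord}(\gamma)} R \cdot \Biggl( \prod_{j=q}^d \frac{1}{v_j + 2} - 2 \prod_{j=q}^d \frac{1}{v_j + 3} + \prod_{j=q}^d \frac{1}{v_j + 4} \Biggr).
\]
Setting $A := \prod_{j=q}^d (v_j + 2)^{-1}$, $B := \prod_{j=q}^d (v_j + 3)^{-1}$, $C := \prod_{j=q}^d (v_j + 4)^{-1}$, the pointwise inequality $(v_j + 3)^2 \ge (v_j + 2)(v_j + 4)$ (i.e.\ $9 \ge 8$) gives $AC \ge B^2$, so by AM--GM $A + C \ge 2\sqrt{AC} \ge 2B$, equivalently $A - 2B + C = (\sqrt{A} - \sqrt{C})^2 + 2(\sqrt{AC} - B) \ge 0$.

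The main obstacle is the combinatorial bookkeeping that brings the three entries into the same sum-over-orderings parametrization with matching factors of $2$; once that reduction is in hand, the algebraic core is the elementary convexity inequality above.
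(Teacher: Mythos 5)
Your proposal is correct and follows essentially the same route as the paper: both reduce the quantity to $2b_{\gamma+2v_{\{k+1,k+2\}}}-2b_{\gamma+v_{\{k+1,k+2\}}+v_{\{k+1,k+3\}}}+b_{\gamma+v_{\{k+1,k+2\}}+v_{\{k+3,k+4\}}}$, match the three sums term by term using that the two placements of the added edges yield identical cumulative-union cardinalities (the paper phrases this as $\rho(\underline{e}'_1)=\rho(\underline{e}'_2)$ under a replacement map, you as the factor-$2$ bookkeeping over positions $p<q$), and conclude with the same AM--GM step based on $(\beta+1)^2\geq\beta(\beta+2)$.
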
 
This proposition implies, together with Theorem~\ref{Antheorem}, that~$A^{(n')}$ is positive semidefinite for all~$n' \geq k$ if and only if the two block matrices from Theorem~$\ref{Antheorem}$ are positive semidefinite (where ~$(A^{(n')})_{n' \geq k}$ denotes the sequence of matrices corresponding to~$\gamma$).

In Section~\ref{computer} we give computational results of the computer search which exhibits the result in Theorem~\ref{theorem2}, using Theorem~\ref{Antheorem}. We conclude the paper with a short discussion (in Section~\ref{extensions}) of the possible directions in which the main ideas of this paper could be extended.

\section{How to use Theorem~\ref{Antheorem} to show Theorem~\ref{theorem2}}\label{howtouse}
In this section we explain how Theorem~\ref{Antheorem} will be used to exhibit the result in Theorem~\ref{theorem2}. The polynomial~$f_d$ is convex over~$\Delta_m$ if its Hessian
\begin{align}
H(f_d) = \left( \frac{\partial^2 f_d}{\partial e_i \partial e_j} \right)_{e_i,e_j \in E}
\end{align}
 is positive semidefinite on~$\Delta_m$. We begin in the same way as Brosch, Laurent and Steenkamp \cite{fdpoly}, by writing the Hessian  of~$f_d$ as a  matrix polynomial, which involves a set of matrices~$Q_{\gamma}$ arising as the coefficients of the Hessian in the monomial base.  The Hessian of~$f_d$ is positive semidefinite on~$\Delta_m$  if each of the matrices~$Q_{\gamma}$ appearing in this matrix polynomial is positive semidefinite. For a fixed~$d$, the matrices~$Q_{\gamma}$ which arise can be seen to correspond to multigraphs on~$d-2$ edges. Then we use our new result, Theorem~$1.3$, to verify positive semidefiniteness of each of these matrices.
 We verify the conjecture for~$d \leq 9$ and for all~$n$ (in case~$L=2$). 

Let us first recall some notation and results from~\cite{fdpoly}. For a~$d$-tuple~$(e_1,\ldots,e_d) \in E^d$, define
$$
c_{(e_1,\ldots,e_d)} := \frac{1}{|e_1 \cup \ldots \cup e_d|}. 
$$
Let~$\N^m_d$ denote the set of~$m$-tuples in~$\Z_{\geq 0}$ with sum of entries equal to~$d$ and we set~$|\alpha|:=\sum_{i=1}^n \alpha_i$ for~$\alpha \in \N^m$. For a~$d$-tuple~$\underline{e}=(e_{i_1},\ldots ,e_{i_d}) \in E^d$, with~$i_1, \ldots,i_d \in [m]$, define the sequence~$\alpha_n(\underline{e}) \in \N^m$, where $\alpha_n(\underline{e})_l$ is the number of indices among~$i_1,\ldots ,i_d$ that are equal to~$l$, for each~$l \in [m]$. Then
$$
x_{e_{i_1}} \cdots x_{e_{i_{d}}}  = x_{e_1}^{\alpha_n(\underline{e})_1} \cdots x_{e_m}^{\alpha_n(\underline{e})_m}, 
$$
and $|\alpha_n(\underline{e})|=d$ so that $\alpha_n(\underline{e}) \in \N_d^m$. 
For $\alpha \in \N_d^m$ consider any~$d$-tuple~$\underline{e}\in E^d$ such that~$\alpha_n(\underline{e})=\alpha$ and define
$
\widehat{c_{\alpha}} := c_{\underline{e}}$.
Similarly, for $\alpha \in \N^m_d$, define
\begin{align}\label{balphadef}
b_{\alpha} :=  \sum_{\substack{(e_1,\ldots,e_d) \in E^d\\ \alpha_n(\underline{e})=\alpha}} \prod_{i=1}^d \frac{1}{|e_1 \cup \ldots \cup e_i|}, \quad \text{ so that } f_d(x) = \sum_{\alpha \in \N^m_d} b_{\alpha} x^{\alpha}.
\end{align}
For~$i \in \tbinom{[n]}{2}$, write~$v_i$ for the indicator vector in~$\R^{\tbinom{[n]}{2}}$, which has a~$1$ in position~$i$ and zeros elsewhere. Then the Hessian~$H(f_d)$ is equal to
$$
H(f_d)(x) = \sum_{\gamma \in \N_{d-2}^m} x^{\gamma} Q_{\gamma},
$$
where
\begin{align} \label{qgammadef}
(Q_{\gamma})_{i,j} = \begin{cases} (\gamma_i+1) (\gamma_j+1) b_{\gamma+v_i+v_j} &\mbox{if } i \neq j, \\
 (\gamma_i+1) (\gamma_i+2) b_{\gamma+2 v_i } & \mbox{if } i=j, \end{cases}
\end{align}
for~$i,j \in \tbinom{[n]}{2}$.
For any~$\alpha \in \N_d^m$, we write $\alpha! = \alpha_1 ! \cdots \alpha_m !$,
and define
$$
\widehat{b}_{\alpha} := \alpha! \,\, b_{\alpha}.
$$
Then one has (cf.~\cite[Lemma 11]{fdpoly}), for~$d \geq 3$ and~$\gamma \in \N_{d-2}^m$, that
\begin{align}\label{qgammadef2}
Q_{\gamma} = \tfrac{1}{\gamma!} \left(\widehat{b}_{\gamma + v_i+ v_j}\right)_{i,j=1}^m.
\end{align}
The following recurrence relations are given in~\cite{fdpoly}:
\begin{align}
\widehat{b}_{\alpha} = \widehat{c}_{\alpha} \sum_{k: \alpha_k \geq 1} \alpha_k \widehat{b}_{\alpha- v_k},
\end{align}
and (cf.~\cite[Lemma 12]{fdpoly})
\begin{align}\label{moniquedecomp}
Q_{\gamma} =\underbrace{(\widehat{c}_{\gamma+v_i+v_j})_{i,j=1}^m}_{=: M_{\gamma} } \circ \left( \sum_{k \in [m]: \gamma_k \geq 1}  Q_{\gamma -v_k} + \underbrace{\frac{1}{\gamma!} \left(\widehat{b}_{\gamma + v_i}+ \widehat{b}_{\gamma + v_j}\right)_{i,j=1}^m }_{=: R_{\gamma}} \right).
\end{align}

Brosch, Laurent and Steenkamp~\cite{fdpoly} showed that the matrix~$M_{\gamma}=(\widehat{c}_{\gamma+v_i+v_j})_{i,j=1}^m$ in~\eqref{moniquedecomp} is positive semidefinite for all~$n,d,L \geq 2$.  The matrices~$M_{\gamma}$ appear as coefficients of the Hessian of the polynomial~$p_d$ from~\eqref{pd} expressed in the monomial base. By showing positive semidefiniteness of the matrices~$M_{\gamma}$  they proved convexity of~$p_d$ over~$\Delta_m$ for all~$n,d,L \geq 2$, and hence also of~$f_2=\tfrac{1}{L}p_2$ for all~$d,L \geq 2$. Moreover, they provided numerical experiments, showing that the term between brackets in~\eqref{moniquedecomp} is positive semidefinite f or several specific small cases of~$d,L,n$ and that~$R_{\gamma}$ in many of these cases has a strictly negative smallest eigenvalue.

We now proceed to describe a sequence of matrices (related to the matrices~$Q_{\gamma}$) to which we can apply Theorem~\ref{Antheorem}. 
Fix a sequence $\gamma \in \N_{d-2}^m$. We aim to prove that~$Q_{\gamma} \succeq 0$. Let~$\underline{e}=(e_1,\ldots,e_{d-2}) \in E^{d-2}$ be any $(d-2)$-tuple with~$\alpha_n(\underline{e})  = \gamma$. Set~$k:=|\cup_{i=1}^{d-2} e_i|$.  Without loss of generality, we may assume that the vertices occuring in the edges in the tuple~$\underline{e}$ are contained in~$[k]\subseteq [2(d-2)]$, otherwise we relabel the vertices in~$[n]$. Note that~$n \geq k$.

For each~$n' \geq k$, define~$m' := \tbinom{n'}{2}$ and set $\gamma(n'):=\alpha_{n'}(\underline{e}) \in   \N_{d-2}^{m'}$.  
Furthermore, define~$A^{(n')}:= Q_{\gamma(n')}$, which is a matrix of order~$m' \times m'$. (Here~$Q_{\gamma(n')}$ is defined as in~\eqref{qgammadef}, except that we take~$n=n'$, $m=m'$ and~$E=\tbinom{[n']}{2}$ in the preceding definitions.) Then by construction we have $A^{(n)} = Q_{\gamma}$. Furthermore, we can view each~$A^{(n')}$ as an~$\tbinom{[n']}{2} \times \tbinom{[n']}{2}$-matrix. We will call the sequence~$(A^{(n')})_{n'\geq k}$ \emph{the sequence of matrices corresponding to} $\gamma$.

\begin{examp}\label{examplegamma}
Let~$d=3$, let~$n \in \N$ and let~$\gamma = \alpha_n((\{1,2\})) \in \N_{d-2}^m$. As above, we set $k:=|\cup_{i=1}^{d-2} e_i|$. Let~$n' \geq 2$, $m':= \tbinom{n'}{2}$ and $i, j \in \tbinom{[n']}{2}$. Then~$\gamma(n') = \alpha_{n'}((\{1,2\})) \in \N^{m'}_{d-2} = \N^{m'}_{1}$ is the sequence with a~$1$ in the position corresponding to the edge~$e_1:=\{1,2\}$ and zeros elsewhere. Then, for all~$i,j \in \tbinom{[n']}{2}$, we have
\begin{align} \label{easycase}
A^{(n')}_{i,j} &= \hat{b}_{\gamma(n') +v_i+v_j}= (\gamma(n') +v_i+v_j)! \sum_{\substack{(e,f,g) \in E^3 :\\\{e,f,g\}=\{e_1,i,j\} }}\frac{1}{|e|}\cdot\frac{1}{|e\cup f|} \cdot\frac{1}{|e \cup f \cup g|} \notag \\
&=\frac{(v_{e_1} +v_i+v_j)!}{2|e_1 \cup i \cup j|}\sum_{\substack{(e,f,g) \in E^3 :\\\{e,f,g\}=\{e_1,i,j\} }}\frac{1}{|e\cup f|} 
= \frac{1}{|e_1 \cup i \cup j|}\left( \frac{1}{|e_1\cup i|} + \frac{1}{|e_1\cup j|} + \frac{1}{|i\cup j |}\right),
\end{align}
where the last equality is verified by distinguishing the cases that~$e_1,i,j$ are all pairwise different, two of them are equal and one is different, or they are all equal. So, for the sequence~$(A^{(n')})_{n'\geq k}$ the first two properties of Theorem~\ref{Antheorem} hold. The third property also holds: each~$A^{(n')}$ is invariant under the simultaneous action of~$S_{n'-2}$ on its rows and columns (induced by the action of~$S_{n'-2}$ on~$\{3,\ldots,n'\}$). This is seen from~\eqref{easycase}: $A^{(n')}_{i,j} = A^{(n')}_{\sigma \cdot i,\sigma \cdot j}$ for all~$n'\geq 2$ and~$i,j \in \tbinom{[n']}{2}$, as for each subset~$U \subseteq [n']$ and~$\sigma \in S_{n'-2}$  we have $|U|= |\sigma \cdot U|$ and~$|e_1 \cup U| = |e_1 \cup \sigma \cdot U|$.

To verify the conjecture for~$d=3$ and for all~$n$, we can now apply Theorem~\ref{Antheorem} to the sequence~$(A^{(n)})_{n \geq 2}$, and check that the three block matrices from this theorem are positive semidefinite, cf.\ Appendix~\ref{d3verify}.
\end{examp}

The argument for~$S_{n'-k}$-invariance of~$A^{(n')}$ of the above example holds for general~$d$ and~$\gamma$. 
\begin{proposition}
Fix a sequence $\gamma \in \N_{d-2}^m$. Let~$(A^{(n')})_{n'\geq k}$ be the sequence of matrices corresponding to~$\gamma$. Then the three conditions of Theorem~\ref{Antheorem} hold for~$(A^{(n')})_{n'\geq k}$.
\end{proposition}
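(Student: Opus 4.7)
The plan is to verify each of the three conditions of Theorem~\ref{Antheorem} in turn. Each reduces, after unpacking the definitions, to the single observation that the $(e_i,e_j)$-entry of $A^{(n')}=Q_{\gamma(n')}$ depends only on the structure of the finite vertex set $[k]\cup e_i\cup e_j$ (which has at most $k+4$ vertices) and on the incidences of $e_i,e_j$ with the edges of $\underline{e}$, with no further dependence on the ambient $n'$.

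Condition~\ref{1th} is immediate: by construction $A^{(n')}=Q_{\gamma(n')}$ is an $m'\times m'$ matrix indexed by $\tbinom{[n']}{2}$, via~\eqref{qgammadef}. For condition~\ref{2th}, I would expand $A^{(n')}_{e_i,e_j}$ via~\eqref{qgammadef2} as a combinatorial prefactor, depending only on the multiplicities $\gamma(n')_{e_i}$ and $\gamma(n')_{e_j}$, times $b_{\gamma(n')+v_{e_i}+v_{e_j}}$. The sum defining this $b$-coefficient ranges over orderings $(f_1,\ldots,f_d)$ of the multiset $\{\!\{e_1,\ldots,e_{d-2},e_i,e_j\}\!\}$, whose edges all lie in $[k]\cup e_i\cup e_j\subseteq[n']$. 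Each summand is a product of terms $1/|f_1\cup\cdots\cup f_\ell|$, which only involve these edges and are therefore independent of $n'$. The multiplicities $\gamma(n')_{e_i}$ and $\gamma(n)_{e_i}$ also agree, since both equal the number of times $e_i$ occurs in the fixed tuple $\underline{e}$. Hence $A^{(n')}_{e_i,e_j}=A^{(n)}_{e_i,e_j}$.

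For condition~\ref{3th}, note first that since $\underline{e}\subseteq\tbinom{[k]}{2}$, the vector $\gamma(n)$ is supported on edges inside $\tbinom{[k]}{2}$, each of which is fixed pointwise by every $\sigma\in S_{n-k}$. For arbitrary $e_i,e_j\in\tbinom{[n]}{2}$, the componentwise action $(f_1,\ldots,f_d)\mapsto(\sigma f_1,\ldots,\sigma f_d)$ on $d$-tuples of edges carries the index set of the sum defining $b_{\gamma(n)+v_{e_i}+v_{e_j}}$ bijectively onto the index set of the sum for $b_{\gamma(n)+v_{\sigma e_i}+v_{\sigma e_j}}$, and preserves each summand since the cardinalities $|f_1\cup\cdots\cup f_\ell|$ are invariant under relabeling of vertices by the bijection $\sigma$. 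The combinatorial prefactor is likewise unchanged, since the multiplicities of $e_i,e_j$ within $\underline{e}$ coincide with those of $\sigma e_i,\sigma e_j$. This gives $A^{(n)}_{\sigma e_i,\sigma e_j}=A^{(n)}_{e_i,e_j}$.

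There is no essential obstacle: once the ``finite support'' observation of the first paragraph is in place, the three conditions all follow from direct reindexing of the sum defining $b_{\alpha}$. The proof essentially mirrors, for general $d$ and $\gamma$, the ad hoc calculation already carried out in Example~\ref{examplegamma} for $d=3$, where formula~\eqref{easycase} exhibits the same independence of $n'$ and the same $S_{n'-k}$-invariance that we need in general.
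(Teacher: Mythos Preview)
Your proposal is correct and follows essentially the same route as the paper's proof: both verify condition~\ref{3th} by observing that $\gamma(n')$ is fixed by $\sigma\in S_{n'-k}$ and then reducing to the invariance of the cardinalities $|f_1\cup\cdots\cup f_\ell|$ under the bijection $\sigma$, which matches summands of $b_{\gamma(n')+v_{e_i}+v_{e_j}}$ with those of $b_{\gamma(n')+v_{\sigma e_i}+v_{\sigma e_j}}$. The paper dismisses conditions~\ref{1th} and~\ref{2th} as trivial from~\eqref{qgammadef}, whereas you spell out the finite-support argument for~\ref{2th} explicitly; this is a harmless elaboration rather than a different approach.
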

\proof

Let~$\underline{e}=(e_1,\ldots,e_{d-2}) \in E^{d-2}$ be any $(d-2)$-tuple with~$\alpha_n(\underline{e})  = \gamma$. Set~$k:=|\cup_{i=1}^{d-2} e_i|$. 
 Without loss of generality, we may assume that the vertices occuring in the edges in the tuple~$\underline{e}$ are contained in~$[k]\subseteq [2(d-2)]$, otherwise we relabel the vertices in~$[n]$. Note that~$n \geq k$. 
The first two statements of Theorem~\ref{Antheorem} hold trivially for~$(A^{(n')})_{n'\geq k}$, from the definition in~\eqref{qgammadef}. 

We prove that each~$A^{(n')}$ is invariant under simultaneous permutation of the rows and columns by~$S_{n'-k}$, i.e.,
$$
A^{(n')}_{i,j} = A^{(n')}_{\sigma \cdot i, \sigma \cdot j}  \text{ for all } \sigma \in S_{n'-k}, \text{ and } i, j \in \tbinom{[n']}{2}.
$$
For any~$\sigma \in S_{n'-k}$ the sequence~$\gamma(n'):=\alpha_{n'}(\underline{e}) \in   \N_{d-2}^{m'}$ is fixed by~$\sigma$, since the edges in the tuple~$\underline{e}$ are contained in~$[k]$. Hence it suffices (cf.~\eqref{qgammadef2})  to prove that
\begin{align} \label{bequality}
 b_{\gamma(n')+v_{\sigma(i)}+v_{\sigma(j)}} = b_{\gamma(n')+ v_i+v_{j}},
\end{align}
for all  $\sigma \in S_{n'-k}, i, j \in \tbinom{[n']}{2}$ and~$\sigma \in S_{n'-k}$.
Write~$i=\{i_1,i_2\}$, $j=\{j_1,j_2\}$. Then each summand in the definition~\eqref{balphadef} of~$ b_{\gamma(n')+v_{\sigma(i)}+v_{\sigma(j)}}$  is a product of terms where each term has the form
\begin{align*}
\frac{1}{|U|},  \,\,\,\, \frac{1}{| U \cup \sigma \cdot \{i_1,i_2\}|} \,\,\, \text{ or } \frac{1}{| U \cup\sigma \cdot \{ i_1,i_2\}  \cup \sigma \cdot \{ j_1,j_2\}|},
\end{align*}
for~$U \subseteq [k]$. But for any~$U \subseteq [k]$ and~$W \subseteq [n]$ we have~$|U \cup \sigma W| = |\sigma (U \cup W)| = |U \cup W|$, as~$U$ is fixed by~$\sigma$. So
\begin{align*}
| U \cup \sigma \cdot \{i_1,i_2\}| &=| U \cup \{i_1,i_2\}|,\quad\text{ and}\\
| U \cup\sigma \cdot \{ i_1,i_2\}  \cup \sigma \cdot \{ j_1,j_2\}| &= | U \cup\{ i_1,i_2\}  \cup  \{ j_1,j_2\}|, \end{align*}
for~$U \subseteq [k]$.  This shows~$\eqref{bequality}$. 
\endproof

\begin{procedure}\label{procver}
One can now perform the following steps to verify positive semidefiniteness of~$f_d$ for a fixed~$d$, for all~$n$ (and hence to exhibit the result in Theorem~\ref{theorem2}).
\begin{enumerate}[label=(\roman*)]
\item Enumerate up to isomorphism all multigraphs $\underline{e}= (e_1,\ldots,e_{d-2})$ with exactly~$(d-2)$ edges without isolated vertices. (Such graph has at most~$2(d-2)$ vertices.)
\item For each multigraph~$\underline{e}= (e_1,\ldots,e_{d-2})$ from the previous step, we set~$k:= |\cup_{i=1}^{d-2} e_i|  \leq 2(d-2)$. We relabel the vertices such that~$e_i \subseteq [k]$ for each~$i \in [d-2]$.
\item  For each~$n \geq k$, set~$\gamma(n) := \alpha_n(\underline{e})\in \N_{d-2}^{\binom{n}{2}}$ and define~$A^{(n)} := Q_{\gamma(n)}$. We verify that~$A^{(n)}$ is positive semidefinite for each~$n \geq k$ by verifying that the three  block matrices given in Theorem~\ref{Antheorem} (which are constructed from the matrix~$A^{(k+4)}$) are positive semidefinite.
\end{enumerate}
After performing these steps, we know that $Q_{\gamma(n)}$ is positive semidefinite for each~$n \geq k$ and for each multigraph, hence~$H(f_d) \succeq 0$, hence~$f_d$ is convex (for the considered fixed~$d$, but for all~$n$). 
\end{procedure}

\section{General theory: representative sets\label{preliminaries}}
We now indicate a known general method for symmetry reduction using elementary representation theory (which method was used in~\cite{artikel}). We briefly state the results.

Let~$G$ be a group acting on a finite dimensional complex vector space~$V$. Then~$V$ is called a~$G$\emph{-module}. A \emph{submodule} $U$ of~$V$ is a subspace of~$V$ which is itself~$G$-invariant. For two~$G$-modules~$V$ and~$W$, a $G$\emph{-homomorphism} $\psi: V \to W$ is a linear map such that~$g \cdot \psi(v)=\psi(g \cdot v)$ for all~$v \in V$ and $g \in G$. If there exists a bijective $G$-homomorphism from~$V$ to~$W$ (also called a $G$\emph{-isomorphism}) then~$V$ and $W$ are called \emph{equivalent} or ($G$-)\emph{isomorphic}.

A $G$-module~$V$ is called \emph{irreducible} if its only submodules are~$\{0\}$ and~$V$ itself and~$V \neq \{0\}$. The algebra of~$G$-homomorphisms~$V \to V$ is denoted by $\text{End}_G(V)$ and is called the  \emph{centralizer algebra} of the action of~$G$ on~$V$.

Let~$V$ be a finite dimensional complex vector space and~$G$ be a finite group acting on~$V$. In this case the~$G$-module~$V$ can be  decomposed as $V=V_1 \oplus \ldots \oplus V_k$ such that each~$V_i$ is can be written as~$V_{i,1} \oplus \ldots \oplus V_{i,m_i}$, with the property that the~$V_{i,j}$ are irreducible~$G$-modules such that~$V_{i,j}$ and~$V_{i',j'}$ are isomorphic if and only if~$i=i'$.  The number~$k$ is unique. Moreover, the~$V_i$ are unique up to permuting indices and they are called the~$G$-\emph{isotypical components}.  

Choose for each~$i \leq k$ and~$j \leq m_i$,    a nonzero vector~$u_{i,j} \in V_{i,j}$ with the property that for each~$i \in [k]$ and all~$j,j'\leq m_i$ there exists a~$G$-isomorphism~$V_{i,j} \to V_{i,j'}$ mapping~$u_{i,j}$ to~$u_{i,j'}$. For each~$i \leq k$,  define~$U_i:=(u_{i,1},\ldots ,u_{i,m_i})$, which is an ordered~$m_i$-tuple of elements~$u_{i,j}$ (where $j=1,\ldots,m_i$).
\begin{defn}[Representative set]
We call any set~$\{U_1,\ldots,U_k\}$ obtained as described in the previous paragraph a \emph{representative set} for the action of~$G$ on~$V$.
\end{defn}
The notion `representative set' is closely related to the notion of a~\emph{symmetry adapted basis}, a basis which respects the decomposition of~$V$ into~$G$-modules~\cite{symmetryadapted}. However, in a representative set one `representative' basis element of each irreducible module is chosen. The collection of basis elements~$u_{i,j}$ constituting the ordered~$m_i$-tuples in the representative set also is sometimes called a \emph{symmetry basis} in the literature (e.g., in~\cite{blekherman}).

Since, for each~$i,j$, it holds that~$V_{i,j}=$ is spanned by~$G \cdot u_{i,j}$, we have~$V_{i,j} = \C G \cdot u_{i,j}$, where~$\C G$ denotes the (complex) group algebra of~$G$. So
\begin{align} \label{Rm}
V = \bigoplus_{i=1}^k\bigoplus_{j=1}^{m_i} \C G \cdot u_{i,j}.
\end{align}
Moreover, it holds that (which follows from Schur's well-known lemma) that
\begin{align}
\dim\text{End}_G(V) = \dim\text{End}_G\left(\bigoplus_{i=1}^k\bigoplus_{j=1}^{m_i}V_{i,j}\right) = \sum_{i=1}^k m_i^2.
\end{align}

Let~$G$ be a finite group acting on a finite dimensional complex vector space~$V$. Let~$\langle \, , \rangle$ be a $G$-invariant inner product on~$V$. (From any inner product $\langle \, ,  \rangle$  on~$V$ one can derive a $G$-invariant inner product~$\langle \, , \rangle_G$ via~$\langle x ,y \rangle_G := \sum_{g \in G} \langle g \cdot x , g \cdot y \rangle$.)  Let~$\{U_1,\ldots,U_k\}$ be any representative set for the action of~$G$ on~$V$ and define the (linear) map
\begin{align} \label{PhiC}
    \Phi \colon \text{End}_G(V) \to \bigoplus_{i=1}^k \C^{m_i \times m_i} \,\, \text{ with } \,\, A \mapsto \bigoplus_{i=1}^k \left( \langle A u_{i,j'} , u_{i,j}  \rangle \right)_{j,j' =1}^{m_i}.
\end{align}
An element~$A \in \text{End}(V)$ is called \emph{positive semidefinite} if $\langle Av, v \rangle \geq 0$ for all~$v \in V$ and $\langle Av,w \rangle = \langle v, A w\rangle$ for all~$v,w \in V$. 
\begin{proposition}\label{poslemma}
The map~$\Phi$ is bijective, and for any $A \in \text{\normalfont End}_G(V)$:
\begin{align}
 \text{$A$ is positive semidefinite} \,\,\, \Longleftrightarrow  \,\,\, \text{$\Phi(A)$ is positive semidefinite}.
 \end{align}
\end{proposition}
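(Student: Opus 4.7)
The plan is to use the isotypical decomposition together with Schur's lemma, reducing the claim component by component. First, I would observe that the isotypical components $V_1,\ldots,V_k$ are pairwise orthogonal under any $G$-invariant inner product (the orthogonal complement of $V_i$ is a $G$-submodule, and Schur forces projections between non-isomorphic irreducibles to vanish), and that $A$ preserves each $V_i$. Hence both $A$ and $\Phi(A)$ split as direct sums over $i$, and both claims (bijectivity and the PSD equivalence) reduce to working inside a single isotypical component $V_i$.

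Fix $i$ and apply Schur's lemma inside $V_i$: any $G$-homomorphism $V_{i,j'}\to V_{i,j}$ is a scalar multiple of the chosen isomorphism $\phi_{j',j}$ sending $u_{i,j'}\mapsto u_{i,j}$. Consequently there are unique scalars $c^{(i)}_{j,j'}$ with $Au_{i,j'}=\sum_j c^{(i)}_{j,j'}u_{i,j}$, and $A|_{V_i}$ is completely determined by $C_i:=(c^{(i)}_{j,j'})_{j,j'}$. Set $H_i:=(\langle u_{i,j},u_{i,j'}\rangle)_{j,j'}$; this is the Gram matrix of the vectors $u_{i,j}$, which are linearly independent (the sum $\bigoplus_j V_{i,j}$ is direct), so $H_i$ is Hermitian positive definite. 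A one-line calculation then yields $\Phi(A)_i=H_iC_i$. Since $H_i$ is invertible the map $C_i\mapsto H_iC_i$ is a bijection, and a dimension check $\dim\operatorname{End}_G(V)=\sum_i m_i^2$ (via Schur) confirms that domain and codomain of $\Phi$ have equal dimension; hence $\Phi$ is a linear bijection.

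For the PSD equivalence I would pass to the canonical $G$-isomorphism $V_i\cong W_i\otimes\mathbb{C}^{m_i}$, where $W_i$ is an abstract copy of the common irreducible and $w_i\otimes e_j\mapsto u_{i,j}$ under the chosen isomorphisms. Under this identification $A|_{V_i}$ becomes $I_{W_i}\otimes C_i$. A second application of Schur's lemma forces any $G$-invariant inner product on $V_i$ to pull back to $\langle\cdot,\cdot\rangle_{W_i}\otimes H_i$, where $\langle\cdot,\cdot\rangle_{W_i}$ is the essentially unique $G$-invariant inner product on $W_i$ and $H_i$ is viewed as a Hermitian form on $\mathbb{C}^{m_i}$. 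Choosing an orthonormal basis of $W_i$ decomposes $V_i$ into mutually orthogonal copies of $(\mathbb{C}^{m_i},H_i)$ on each of which $I\otimes C_i$ acts as $C_i$. Therefore $A|_{V_i}$ is self-adjoint and positive semidefinite iff $C_i$ is self-adjoint and positive semidefinite relative to $H_i$, iff $H_iC_i=\Phi(A)_i$ is positive semidefinite in the standard sense. Assembling over $i$ gives the stated equivalence.

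The main subtlety is that the irreducible summands $V_{i,j}$ inside one isotypical component need not be mutually orthogonal under the $G$-invariant inner product, so $H_i$ is generically non-diagonal; the \emph{product} $H_iC_i$, not $C_i$ alone, is what captures positive semidefiniteness. Carefully verifying the PSD equivalence therefore requires the second use of Schur's lemma to factor the inner product through the tensor decomposition $V_i\cong W_i\otimes\mathbb{C}^{m_i}$, and this is the most delicate step of the argument.
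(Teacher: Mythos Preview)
Your argument is correct and is precisely the standard representation-theoretic route (Schur's lemma on each isotypical component, together with the tensor factorization $V_i\cong W_i\otimes\mathbb{C}^{m_i}$ and the observation that the induced form factors as $\langle\cdot,\cdot\rangle_{W_i}\otimes H_i$). The paper does not actually supply its own proof of this proposition---it simply remarks that the result ``follows from well-established general theory'' and refers to the author's thesis---so there is nothing substantive to compare against; your treatment, including the care you take with the Gram matrix $H_i$ when the irreducible summands $V_{i,j}$ inside an isotypical component are not mutually orthogonal, is exactly what one would expect such a reference to contain.
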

This proposition follows from well-established general theory. For a proof of the proposition in this exact form, we refer to~\cite{proefschrift}.

In our case, we will apply the above to the following. For any set~$Z$ and group~$G$ acting on~$Z$ we write~$Z^G$ for the set of all~$G$-invariant elements of~$Z$. Fix~$k \in \N$. For~$n \in \N$, consider the complex vector space~$V:= \C^E$, where as before~$E = \tbinom{[n]}{2}$. Then the finite group~$G:=S_{n-k}$ acts on~$E$ as in the previous section, hence on~$V=\C^E$ via permutation matrices. This means that~$\text{End}_G(V)$ is naturally isomorphic to~$(\C^{E \times E})^{S_{n-k}}$, i.e., the space of complex~$E \times E$-matrices which are invariant under the simultaneous action of~$S_{n-k}$ on their rows and columns. In this case~\eqref{PhiC} specializes to
\begin{align} 
    \Phi \colon(\C^{E \times E})^{S_{n-k}} \to \bigoplus_{i=1}^k \C^{m_i \times m_i} \,\, \text{ with } \,\, A \mapsto \bigoplus_{i=1}^k U_i^* A U_i,
\end{align}
and since our representative sets will turn out to consist of real matrices, we can replace~$\C$ by~$\R$ in the above equation, and obtain a linear bijective map~$\Phi \colon(\R^{E \times E})^{S_{n-k}} \to \bigoplus_{i=1}^k \R^{m_i \times m_i}$ with the property that~$A \succeq 0$ if and only if~$\Phi(A)\succeq 0$ for all~$A \in (\R^{E \times E})^{S_{n-k}}$.

For convenience, we record the following characterization of representative sets  (cf. \cite[proof technique of Proposition 2]{artikel}, separately stated with proof in~\cite[Proposition 2.4.3]{proefschrift}). 
\begin{proposition}\label{propset}
Let~$G$ be a finite group acting on a finite dimensional $\C$-vector space~$V$. Let $k,$ $m_1,\ldots,m_k \in \N$, and $u_{i,j} \in V$ for~$i=1,\ldots,k$,~$j=1,\ldots,m_i$. Then 
\begin{align}\label{lemmaset1}
\left\{ \,\left(u_{i,1},\ldots ,u_{i,m_i}\right)\, \, | \,\, i=1,\ldots,k \right\}
\end{align}
is a representative  set for the action of~$G$ on~$V$ if and only if:
\begin{enumerate}[label=(\roman*)]
\item \label{1eig} $ V=  \bigoplus_{i=1}^k\bigoplus_{j=1}^{m_i} \C G \cdot u_{i,j}$,
\item \label{2eig} For each $i=1,\ldots, k$ and  $j,j' =1,\ldots, m_i$, there exists a $G$-isomorphism~$\C G\cdot  u_{i,j} \to \C G \cdot u_{i,j'}$ which maps~$u_{i,j}$ to~$u_{i,j' }$,
\item \label{3eig} $\sum_{i=1}^k m_i^2  \geq \dim (\text{\normalfont End}_G(V))$.
\end{enumerate}
\end{proposition}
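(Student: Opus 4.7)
The plan is to treat the two directions separately. The forward direction is almost immediate from the definition of a representative set: if $\{(u_{i,1},\ldots,u_{i,m_i}) : i=1,\ldots,k\}$ is a representative set, then by construction each $\C G \cdot u_{i,j}$ is the irreducible submodule $V_{i,j}$, so (i) and (ii) are built into the definition, and Schur's lemma together with the equivalence pattern of the $V_{i,j}$ gives $\dim \text{End}_G(V) = \sum_i m_i^2$, yielding (iii) with equality.

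For the converse, assume (i), (ii), (iii). For each $i$, condition (ii) says that all the cyclic submodules $\C G \cdot u_{i,j}$ ($j=1,\ldots,m_i$) are mutually $G$-isomorphic; call their common isomorphism type $T_i$. By Maschke's theorem decompose $T_i \cong \bigoplus_{s} W_s^{a_{i,s}}$ into irreducibles, where $W_1,\ldots,W_t$ is a fixed list of the pairwise non-isomorphic irreducible $G$-modules appearing in $V$. From (i) it follows that $V \cong \bigoplus_{i=1}^k T_i^{m_i}$, hence the multiplicity of $W_s$ in $V$ equals $\sum_i m_i a_{i,s}$. Schur's lemma then gives
\begin{equation*}
\dim \text{End}_G(V) \;=\; \sum_{s} \Bigl(\sum_i m_i a_{i,s}\Bigr)^2 \;=\; \sum_{i,i'} m_i m_{i'} \, b_{i,i'}, \qquad b_{i,i'} := \sum_s a_{i,s} a_{i',s} = \dim \text{Hom}_G(T_i,T_{i'}).
\end{equation*}

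The main step is now the following simple inequality chain, which I expect to be the key (but not hard) moment of the argument. Since each $u_{i,j}$ is nonzero (otherwise the corresponding summand in (i) is zero and can be discarded), each $T_i$ is a nonzero module, so $b_{i,i} = \sum_s a_{i,s}^2 \geq 1$, with equality if and only if exactly one $a_{i,s}$ equals $1$ and the rest are $0$, i.e.\ $T_i$ is irreducible. Dropping the off-diagonal contributions, which are nonnegative, yields
\begin{equation*}
\dim \text{End}_G(V) \;\geq\; \sum_i m_i^2 \, b_{i,i} \;\geq\; \sum_i m_i^2.
\end{equation*}
Combined with the hypothesis (iii), both inequalities must be equalities. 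The first forces $b_{i,i'} = 0$ whenever $i \neq i'$ (so $T_i$ and $T_{i'}$ share no irreducible constituent, hence are non-isomorphic), and the second forces each $T_i$ to be irreducible.

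From these conclusions the representative-set structure is recovered: each $V_{i,j} := \C G \cdot u_{i,j}$ is irreducible, $V_{i,j} \cong V_{i',j'}$ if and only if $i = i'$ (by non-vanishing vs.\ vanishing of $\text{Hom}_G$), the isotypical components of $V$ are the $V_i := \bigoplus_j V_{i,j}$, and (ii) supplies the required $G$-isomorphisms $V_{i,j} \to V_{i,j'}$ sending $u_{i,j} \mapsto u_{i,j'}$. This matches the definition of a representative set verbatim. The only potential obstacle is ensuring that the $u_{i,j}$ are genuinely nonzero and that the direct sum in (i) is internal (not merely an isomorphism), but both are encoded in the statement of (i).
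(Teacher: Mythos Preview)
The paper does not supply its own proof of this proposition; it merely records the statement and cites external sources (\cite{artikel}, \cite{proefschrift}). Your argument is correct and is the standard one: the forward direction is immediate from the definition, and for the converse you expand $\dim \text{End}_G(V) = \sum_{i,i'} m_i m_{i'}\, b_{i,i'}$ with $b_{i,i'} = \dim \text{Hom}_G(T_i,T_{i'})$, drop the nonnegative off-diagonal terms, use $b_{i,i}\geq 1$, and let hypothesis~(iii) squeeze both inequalities to equality, forcing each $T_i$ irreducible and the $T_i$ pairwise inequivalent.

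One small correction to your closing sentence: nonzero-ness of the $u_{i,j}$ is \emph{not} automatically encoded in~(i). With $G$ trivial, $V=\C$, $k=2$, $m_1=m_2=1$, $u_{1,1}=1$, $u_{2,1}=0$, conditions (i)--(iii) all hold (the zero summand does no harm to the internal direct sum), yet the collection is not a representative set, since the true number of isotypical components of $V$ is $1$. So the proposition should be read with the tacit hypothesis that all $u_{i,j}\neq 0$; this is all that is ever needed in the paper (cf.\ the application in Proposition~\ref{u1u2u3}), and under that hypothesis your proof is complete.
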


\section{Proof of Theorem~\ref{Antheorem}: symmetry reduction\label{bigproof}}
Fix~$k \in \N$. For~$n \in \N$ with~$n \geq k$, let again $S_{n-k} \subseteq S_n$ be the subgroup of all~$\sigma \in S_n$ with~$\sigma(i)=i$ for all~$i \in [k]$. Set~$m=(m_1,m_2,m_3)= (\tbinom{k}{2}+k+1,k+1,1)$. For~$j \in [\tbinom{k}{2}]$, let~$e_j$ denote the $j$th subset of order 2 of~$[k]$, where these subsets are ordered in lexicographic (or: any fixed) order. 
Recall that for~$e \in E= \tbinom{[n]}{2}$, we write~$v_e$ for the corresponding standard basis vector in~$\C^E$.
\begin{proposition}\label{u1u2u3}
Suppose that~$n \geq k+4$. A representative set for the action of~$S_{n-k}$ on~$\C^E$ is given by~$[U_1,U_2,U_3]$, where~$U_{i} = [ u_{i,j} \,\, | \,\,  j \in [m_i]]$, where
\begin{align*}
  \begin{split}
    u_{1,j} &:= v_{e_j}  \,  \text{ for $j  \in [\tbinom{k}{2}]$ },\\
    u_{1,\tbinom{k}{2}+j} &:= \sum_{i=k+1}^n v_{\{j,i\}} \,\text{ for $j \in [k]$},\hspace{-50pt}\\
    u_{1,\tbinom{k}{2}+k+1} &:=  \sum_{k+1 \leq i < j \leq n} v_{\{i,j\}} ,
\end{split}
\begin{split}
u_{2,j} &:= v_{\{j,k+1\}}-v_{\{j,k+2\}} \,\text{ for $j \in [k]$},\\
 u_{2,k+1}&:= \sum_{i=k+3}^n (v_{\{k+1,i\}}-v_{\{k+2,i\} }), \\
\end{split}\\
u_{3,1}&:=  v_{\{k+1,k+2\}}-v_{\{k+1,k+3\}}  - v_{\{k+2,k+4\}}+v_{\{k+3,k+4\}}.\hspace{-100pt}
\end{align*}
\end{proposition}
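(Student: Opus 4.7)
To prove the proposition, I plan to verify the three conditions of Proposition~\ref{propset} for the proposed set $\{U_1,U_2,U_3\}$. For condition (iii), I would compute $\dim\text{End}_{S_{n-k}}(\C^E)$ as the number of $S_{n-k}$-orbits on $E\times E$; each orbit of a pair $(e,f)$ is determined by the ``types'' $(|e\cap[k]|,|f\cap[k]|)\in\{0,1,2\}^2$ together with the intersection pattern of the vertices of $e,f$ lying outside $[k]$. A short case analysis over the nine type-combinations yields a total of $\binom{k}{2}^2+2k\binom{k}{2}+2\binom{k}{2}+2k^2+4k+3$ orbits, which equals $m_1^2+m_2^2+m_3^2$ after substituting $m_1=\binom{k}{2}+k+1$, $m_2=k+1$, $m_3=1$, confirming (iii) with equality.

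For condition (ii) I would treat each tuple separately. For $U_1$, each $u_{1,j}$ is a sum over a single $S_{n-k}$-orbit of edges, hence $G$-invariant; so $\C G\cdot u_{1,j}$ is a one-dimensional trivial module and the scalar-preserving map $u_{1,j}\mapsto u_{1,j'}$ is automatically a $G$-isomorphism. For $U_3$, $m_3=1$ and there is nothing to check. For $U_2$ I would realize each $\C G\cdot u_{2,j}$ as a copy of the standard representation $M:=\{v\in\C^{\{k+1,\ldots,n\}}:\sum_i v_i=0\}$ via natural $G$-equivariant embeddings: for $j\in[k]$, the map $\varphi_j\colon\C^{\{k+1,\ldots,n\}}\to\C^E$, $v_i\mapsto v_{\{j,i\}}$; and for $j=k+1$, the map $\varphi_{k+1}\colon\C^{\{k+1,\ldots,n\}}\to\C^E$, $v_i\mapsto\sum_{\ell>k,\,\ell\neq i}v_{\{i,\ell\}}$. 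A direct check shows $\varphi_j(v_{k+1}-v_{k+2})=u_{2,j}$ in each case, and both maps restricted to $M$ are injective (using $n-k\geq 3$). Then $\varphi_{j'}\circ(\varphi_j|_M)^{-1}$ provides the required $G$-isomorphism sending $u_{2,j}$ to $u_{2,j'}$.

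For condition (i) I would exploit the $S_{n-k}$-stable decomposition $\C^E=\C^{\binom{[k]}{2}}\oplus\bigoplus_{j=1}^{k}W_j\oplus W_0$, where $W_j:=\text{span}\{v_{\{j,i\}}:k+1\leq i\leq n\}$ for $j\in[k]$ and $W_0:=\text{span}\{v_e:e\subseteq\{k+1,\ldots,n\},\,|e|=2\}$. Each $W_j$ ($j\in[k]$) is the permutation $S_{n-k}$-module on $n-k$ points, splitting as trivial $\oplus$ standard; and $W_0$ is the permutation module on $\binom{[n-k]}{2}$, splitting (since $n-k\geq 4$) as trivial $\oplus$ standard $\oplus V_{(n-k-2,2)}$. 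I would then verify that each $\C G\cdot u_{i,j}$ coincides with one of these irreducible summands: the $u_{1,j}$'s span the trivial parts of $\C^{\binom{[k]}{2}}$, of each $W_j$, and of $W_0$; the $u_{2,j}$'s give the standard parts of the $W_j$'s and of $W_0$ (by the construction in (ii)); and $u_{3,1}$ generates the copy of $V_{(n-k-2,2)}$ inside $W_0$. The directness of the sum, together with its equality with $V$, then follows from the orthogonality of distinct isotypical components and a dimension count matching $|E|=\binom{n}{2}$.

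The step I expect to require the most care is placing $u_{3,1}$ inside the $V_{(n-k-2,2)}$-component of $W_0$: one must first identify the trivial and standard submodules of $W_0$ (as the span of $\sum_e v_e$ and as the image of $\psi\colon\C^{n-k}\to W_0$, $v_i\mapsto\sum_{j>k,\,j\neq i}v_{\{i,j\}}$, respectively) and then check by direct computation on the four edges $\{k+1,k+2\}$, $\{k+1,k+3\}$, $\{k+2,k+4\}$, $\{k+3,k+4\}$ defining $u_{3,1}$ that it is orthogonal to both of these submodules. The nine-case orbit enumeration in condition (iii) is lengthy but entirely routine.
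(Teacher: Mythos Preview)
Your proposal is correct and follows the same overall architecture as the paper's proof: both verify the three conditions of Proposition~\ref{propset}, both obtain condition~(iii) via the same orbit count on $E\times E$ (your total $\binom{k}{2}^2+2k\binom{k}{2}+2\binom{k}{2}+2k^2+4k+3$ agrees with the paper's enumeration), and both build the $G$-isomorphisms for condition~(ii) via essentially the same maps (your $\varphi_j$ and $\varphi_{k+1}$ are exactly the isomorphisms the paper writes down).

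The one genuine difference is in how condition~(i) is established. The paper works entirely from first principles: it defines each $V_{i,j}$ explicitly as a concrete subspace of $\C^E$, checks pairwise orthogonality and $S_{n-k}$-invariance directly, and then \emph{deduces} irreducibility of all the $V_{i,j}$ a posteriori from the equality $\sum_i m_i^2=\dim\text{End}_{S_{n-k}}(\C^E)$ in~(iii). You instead import the known Specht-module decomposition of the permutation modules $W_j$ and $W_0\cong M^{(n-k-2,2)}$ (trivial $\oplus$ standard $\oplus$ $V_{(n-k-2,2)}$) and then match the $u_{i,j}$ to the resulting irreducible summands, identifying $u_{3,1}$ by orthogonality to the trivial and standard pieces of $W_0$. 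Your route is a bit shorter and more conceptual if one is willing to quote the decomposition of $M^{(n-k-2,2)}$; the paper's route is self-contained and avoids Young-tableaux machinery altogether, which it explicitly advertises as a feature. Either way the argument goes through, and your identification of the orthogonality check for $u_{3,1}$ as the most delicate step is accurate.
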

\proof
For~$j \in [m_1]$, set~$V_{1,j} = \C u_{1,j}$, which is a~$1$-dimensional subspace of~$V$ on which $S_{n-k}$ acts trivially. Furthermore, for~$j \in[k]$, set
\begin{align*}
V_{2,j}:=  \left\{ \sum_{i=k+1}^n c_i v_{\{j,i\}} \,\, | \,\, \ c \in \C^n: c_i = 0 \text{ for } i \in [k], \, c\T \bm{1} = 0  \right\},
\end{align*}
and set
\begin{align*}
V_{2,k+1}:=  \left\{ \sum_{\substack{e = \{i,j\}: \\ k+1 \leq i < j \leq n}} (c_i+c_j) v_{\{i,j\}} \,\, | \,\, \ c \in \C^n: c_i = 0 \text{ for } i \in [k], \, c\T \bm{1} = 0  \right\} .
\end{align*}
Then~$V_{2,j}$ has dimension~$n-k-1$, for each~$j \in [k+1]$. Finally, define
\begin{align*}
 V_{3,1}=\left\{ \sum_{\substack{e = \{i,j\}: \\ k+1 \leq i < j \leq n}} \lambda_e v_e \,\, | \,\, \lambda_e \in \C, \, \forall m \in \{k+1,\ldots,n\}\,: \,\sum_{e: m \in e} \lambda_e =0  \right\},
\end{align*}
which is a subspace of~$V$ of dimension $\tbinom{n-k}{2}-(n-k) $. It is easy to verify that the~$V_{i,j}$ are pairwise orthogonal and~$S_{n-k}$-invariant.
 Moreover, $\dim V_{1,j} =1$ for~$j \in [m_1]$, $\dim V_{2,j} = n-k-1$ for~$j \in [m_2]$ and~$\dim V_{3,1} = \binom{n-k}{2}-(n-k)$.
We find that
\begin{align*}
\sum_{i=1}^3 \sum_{j=1}^{m_i} \dim V_{i,j} &= (\tbinom{k}{2}+k+1) \cdot 1 + (k+1)\cdot (n-k-1) + \tbinom{n-k}{2}-(n-k)= \tbinom{n}{2},
\end{align*}
so
\begin{align}\label{sumsum}
\C^E = \bigoplus_{i=1}^3 \bigoplus_{j=1}^{m_i} V_{i,j}.
\end{align}
To prove~\ref{3eig}, we count~$\text{dim}(\text{End}_{S_{n-k}}(\C^E)) = | (E \times E)/S_{n-k}|$.  We represent each equivalence class by its lexicographically smallest element. We find
{\small \begin{align*}
  \begin{split}
\tbinom{k}{2}^2 &\text{ orbits } (e_i,e_j) \text{ with } e_i,e_j \in \tbinom{[k]}{2},\\
\tbinom{k}{2}k &\text{ orbits } (e_i,\{j,k+1\}) \text{ with } e_i \in \tbinom{[k]}{2}, j \in [k],\\
\tbinom{k}{2}k &\text{ orbits } (\{j,k+1\},e_i) \text{ with } e_i \in \tbinom{[k]}{2}, j \in [k],\\
\tbinom{k}{2} &\text{ orbits } (e_i,\{k+1,k+2\}) \text{ with } e_i \in \tbinom{[k]}{2},\\
\tbinom{k}{2} &\text{ orbits } (\{k+1,k+2\},e_i) \text{ with } e_i \in \tbinom{[k]}{2},\\
k^2 &\text{ orbits } (\{i,k+1\},\{j,k+1\}) \text{ with } i,j \in [k],\\
k^2 &\text{ orbits } (\{i,k+1\},\{j,k+2\}) \text{ with } i,j \in [k],\\
\end{split}
\begin{split}
k &\text{ orbits } (\{i,k+1\},\{k+1,k+2\}) \text{ with } i \in [k],\\
k &\text{ orbits } (\{k+1,k+2\},\{i,k+1\}) \text{ with } i \in [k],\\
k &\text{ orbits } (\{i,k+1\},\{k+2,k+3\}) \text{ with } i \in [k],\\
k &\text{ orbits } (\{k+1,k+2\},\{i,k+3\}) \text{ with } i \in [k],\\
1 &\text{ orbit } (\{k+1,k+2\},\{k+1,k+2\}),\\
1 &\text{ orbit } (\{k+1,k+2\},\{k+1,k+3\}),\\
1 &\text{ orbit } (\{k+1,k+2\},\{k+3,k+4\}),
\end{split}
\end{align*}}
So~$|(E \times E) / S_{n-k}|=3+4k+2k^2+\tbinom{k}{2}(\tbinom{k}{2}+2k+2)$. A straightforward calculation shows that this number is equal to~$m_1^2+m_2^2+m_3^2 = (\tbinom{k}{2}+k+1)^2+(k+1)^2+1^2$, so~\ref{3eig} is satisfied. 


Observe that
 $\mu u_{1,1} \mapsto  \mu u_{1,j}$ gives an~$S_{n-k}$-isomorphism from~$V_{1,1}$ to~$V_{1,j}$ for~$j\in [m_1]$. Moreover,  $\sum_{i=k+1}^n c_i v_{\{1,i\}}\mapsto  \sum_{i=k+1}^n c_i v_{\{2,j\}}$ gives an~$S_{n-k}$-isomorphism~$V_{2,1} \to V_{2,j}$ for~$j\in[k]$  and
 $$
\sum_{i=k+1}^n c_i v_{\{1,i\}}\mapsto   \sum_{\substack{e = \{i,j\}: \\ k+1 \leq i < j \leq n}} (c_i+c_j) v_{\{i,j\}}
$$
 gives an~$S_{n-k}$-isomorphism~$V_{2,1} \to V_{2,k+1}$.  Together with~\ref{3eig}, this implies that the~$V_{i,j}$ indeed form a decomposition of~$V$ into \emph{irreducible} $S_{n-k}$-submodules (as any further decomposition, or representation, or equivalence among the~$V_{i,j}$ would imply that the squares of the multiplicities of the
irreducible representations of~$V$ sum to a number strictly larger than~$\sum_{i=1}^k m_i^2 \geq \dim (\text{End}_{S_{n-k}}(V))$, which contradicts the fact that~$\Phi$ from~\eqref{PhiC} is a linear bijection), and we have~$V_{i,j} = \C S_{n-k} \cdot u_{i,j}$. The given~$S_{n-k}$-isomorphisms thus imply that~\ref{2eig} is satisfied. By~\eqref{sumsum}, also~\ref{1eig} is satisfied. 
\endproof
We use the above proposition to prove Theorem~\ref{Antheorem}, as follows:
\begin{enumerate}
\item  First we use the representative set of Proposition~\ref{u1u2u3} to find a block-diagonalization of~$A^{(n)}$. This results in three blocks of constant size, with polynomials in~$n$ as entries.  
\item We divide rows and columns simultaneously by common expressions including~$n$, so that the three blocks each can be decomposed as a constant matrix (containing the coefficients before the highest power of~$n$ in the original block, up to a possible scaling by the division we did before) + another matrix whose entries are fractions with constant numerator and non-constant denominator (an expression in~$n$).

For large~$n$, the first (constant) matrix becomes  dominant. So for~$A^{(n)} \succeq 0$ for all~$n\geq k$, it is a necessary condition that the three constant matrices are positive semidefinite.

\item It will turn out that this condition also is sufficient. The matrices containing fractions with~$n$ in the denominator can be shown to be positive semidefinite for all~$n$ (assuming that the constant matrices are positive semidefinite).
   \end{enumerate}

\begin{proof}[Proof of Theorem \ref{Antheorem}]
Let~$(A^{(n)})_{n \geq k}$ be a series of matrices satisfying the conditions of the theorem. Write $a_{(e_i,e_j)}$ for the~$e_i,e_j$-th entry of~$A^{(k+4)}$, for~$e_i, e_j \in \tbinom{[k+4]}{2}$.
First consider~$n  \geq k+4$ and let again~$E:=\tbinom{[n]}{2}$. We use the representative set~$U_1,U_2,U_3$ for the action of~$S_{n-k}$ on~$\C^E$ from Proposition~\ref{u1u2u3}, and setting~$U_i^{(n)}:=U_i$ for~$i \in [3]$ to indicate the dependence on~$n$, we find that
\begin{align} \label{u1au1}
(U_1^{(n)})\T A^{(n)} U_1^{(n)} =\quad
\setlength\aboverulesep{1pt}\setlength\belowrulesep{1pt}
    \setlength\cmidrulewidth{0.5pt}
\begin{blockarray}{cccc}
 &   \tbinom{k}{2} &  k  & 1 \\
\begin{block}{c(c|c|c)}
   \tbinom{k}{2}   & C_{11}  & (n-k)C_{12}   & \tbinom{n-k}{2} C_{13}  \\\cmidrule{2-4}
 k&  (n-k)C_{12}\T   &   (n-k) C_{22}  & \tbinom{n-k}{2}C_{23} \\    \cmidrule{2-4}
 1&  \tbinom{n-k}{2}C_{13}\T  &  \tbinom{n-k}{2}C_{23}\T  &  \tbinom{n-k}{2}C_{33} \\
\end{block}
\end{blockarray} \,\,,
\end{align}
where 
\begin{alignat*}{3}
C_{11} &= \left( a_{(e_i,e_j)} \right)_{e_i,e_j \in \tbinom{[k]}{2}},  \quad \quad \,\,\,\,\,\,\, & C_{12} &= \left( a_{(e_i,\{j,k+1\})} \right)_{\substack{e_i \in \tbinom{[k]}{2} \\ j\in [k]  }}, \\ 
C_{13} &=  \left( a_{(e_i,\{k+1,k+2\})} \right)_{\substack{e_i \in \tbinom{[k]}{2}}},  &  C_{22} &= \left( a_{(\{i,k+1\},\{j,k+1\})} + (n-k-1) a_{(\{i,k+1\},\{j,k+2\})}  \right)_{i,j \in [k]}, \\ 
C_{23} &=  \mathrlap{\left( 2a_{\{j,k+1\},\{k+1,k+2\}} + (n-k-2) a_{\{j,k+1\},\{k+2,k+3\}}  \right)_{j \in [k]},} && \\
C_{33} &= \mathrlap{a_{(\{k+1,k+2\},\{k+1,k+2\})}  \hspace{-2pt} +\hspace{-2pt} 2(n-k-2)  a_{(\{k+1,k+2\},\{k+1,k+3\})} \hspace{-2pt} +\hspace{-2pt} \tbinom{n-k-2}{2} a_{(\{k+1,k+2\},\{k+3,k+4\})}.}&&\hspace{-2pt}
\end{alignat*}
Moreover, 
\begin{align} \label{secondmat}
\setlength\aboverulesep{1pt}\setlength\belowrulesep{1pt}
    \setlength\cmidrulewidth{0.5pt}
(U_2^{(n)})\T A^{(n)} U_2^{(n)}  = 
\begin{blockarray}{ccc}
 &  k&  1 \\
\begin{block}{c(c|c)}
   k& 2 D_{11} &   2(n-k-2)D_{12}  \\
     \cmidrule{2-3}
 1&    2(n-k-2)D_{12}\T \phantom{\binom{T}{T}}  &  2(n-k-2)D_{22}  \\  
\end{block}
\end{blockarray},
\end{align}
where 
\begin{align}
D_{11} &=   \left(a_{(\{i,k+1\},\{j,k+1\})}  -   a_{(\{i,k+1\},\{j,k+2\})}\right)_{i,j \in [k]},\notag \\
D_{12} &=  \left( a_{\{j,k+1\},\{k+1,k+2\}} - a_{\{j,k+1\},\{k+2,k+3\}}  \right)_{j \in [k]},\notag \\
D_{22} &=  a_{(\{k+1,k+2\},\{k+1,k+2\})} + (n-k-4)  a_{(\{k+1,k+2\},\{k+1,k+3\})}  \label{d22} \\&\quad\quad  -(n-k-3) a_{(\{k+1,k+2\},\{k+3,k+4\})}.\notag
\end{align}
Finally, 
\begin{align*}
(U_3^{(n)})\T A^{(n)} U_3^{(n)}  = 4  (a_{(\{k+1,k+2\},\{k+1,k+2\})} - 2  a_{(\{k+1,k+2\},\{k+1,k+3\})} +  a_{(\{k+1,k+2\},\{k+3,k+4\})}).
\end{align*} 
For brevity of notation, we set~$x:= a_{(\{k+1,k+2\},\{k+1,k+2\})}$, $y:= a_{(\{k+1,k+2\},\{k+1,k+3\})}$ and~$z:=  a_{(\{k+1,k+2\},\{k+3,k+4\})}$. Then~$(U_3^{(n)})\T A^{(n)} U_3^{(n)} \succeq 0$ if and only if~$x-2y+z \geq 0$.  

By~\eqref{secondmat}, the matrix~$(U_2^{(n)})\T A^{(n)} U_2^{(n)}$ is positive semidefinite if and only if the matrix
\begin{align} \label{secondmat2}
&\setlength\aboverulesep{0pt}\setlength\belowrulesep{0pt}
    \setlength\cmidrulewidth{0.5pt}
\begin{blockarray}{ccc}
 &   k&   1 \\
\begin{block}{c(c|c)}
   k&  D_{11} &   D_{12}  \\\cmidrule{2-3}
 1&   D_{12}\T    &  (n-k-2)^{-1}D_{22}  \\  
\end{block}
\end{blockarray} 
= \setlength\aboverulesep{1pt}\setlength\belowrulesep{1pt}
    \setlength\cmidrulewidth{0.5pt}\begin{blockarray}{ccc}
 &   k  &  1 \\
\begin{block}{c(c|c)}
   k& D_{11}    & D_{12}  \\\cmidrule{2-3}
 1&  D_{12}\T   &  y-z \\  
\end{block}
\end{blockarray} +(n-k-2)^{-1} \cdot \, \begin{blockarray}{ccc}
 &  k  &  1 \\
\begin{block}{c(c|c)}
   k& 0 &   0  \\\cmidrule{2-3}
 1&    0 &  x-2y+z\\  
\end{block}
\end{blockarray}   \, ,
\end{align}
is positive semidefinite. Here we used~\eqref{d22} to write~$D_{22} = (n-k-2)(y-z) +x -2y+z$. Both matrices in this last sum do not depend on~$n$, only the term~$(n-k-2)^{-1}$ depends on~$n$. Assuming~$(U_3^{(n)})\T A^{(n)} U_3^{(n)} \geq 0$, i.e.,~$x-2y+z \geq 0$, the matrix in~\eqref{secondmat2} is positive semidefinite for all~$n \geq k+4$ if and only if the first summand at the right hand side in~\eqref{secondmat2} is positive semidefinite (this is seen by letting~$n \to \infty$).

By~\eqref{u1au1}, the matrix~$(U_1^{(n)})\T A^{(n)} U_1^{(n)}$ is positive semidefinite if and only if the matrix
\begin{align} \label{u1au2continue}
\setlength\aboverulesep{1pt}\setlength\belowrulesep{1pt}
    \setlength\cmidrulewidth{0.5pt}
\begin{blockarray}{cccc}
 &  \tbinom{k}{2} &   k &  1 \\
\begin{block}{c(c|c|c)}
 \tbinom{k}{2}   &  C_{11} &  C_{12}  &   C_{13} \\\cmidrule{2-4}
 k&   C_{12}\T     &  (n-k)^{-1} C_{22}   & (n-k)^{-1} C_{23} \\    \cmidrule{2-4}
 1&  C_{13}\T    & (n-k)^{-1} C_{23}\T   & \tbinom{n-k}{2}^{-1}C_{33} \\
\end{block}
\end{blockarray} \,\,,
\end{align}
is positive semidefinite. To make the expressions independent of~$n$, it will turn out to be useful to use the relations
\begin{align*}
C_{22} &= (n-k)  \underbrace{(a_{(\{i,k+1\},\{j,k+2\})})_{i,j\in [k]}}_{=: \, C_{22}'} + \underbrace{(a_{(\{i,k+1\},\{j,k+1\})}-a_{(\{i,k+1\},\{j,k+2\})})_{i,j\in [k]}}_{=: \, C_{22}''}, \\
C_{23} &= (n-k)   \underbrace{(a_{(\{i,k+1\},\{k+2,k+3\})})_{i \in [k]}}_{=: \, C_{23}'}  + \underbrace{2(a_{(\{i,k+1\},\{k+1,k+2\})}-a_{(\{i,k+1\},\{k+2,k+3\})})_{i \in [k]}}_{=:\, C_{23}''}, \\
C_{33} &=  \tbinom{n-k}{2}  \underbrace{ a_{(\{k+1,k+2\},\{k+3,k+4\})}}_{=: \, C_{33}'} + 2(n-k-2) (a_{(\{k+1,k+2\},\{k+1,k+3\})} \hspace{-2pt} -\hspace{-2pt} a_{(\{k+1,k+2\},\{k+3,k+4\})})  
\\&\phantom{ = = } + (a_{(\{k+1,k+2\},\{k+1,k+2\})}-a_{(\{k+1,k+2\},\{k+3,k+4\})}),
\end{align*}
and we set~$C_{33}'' := C_{33} - \tbinom{n-k}{2} C_{33}'$. 

So the matrix in \eqref{u1au2continue} is equal to
\begin{align}
&\setlength\aboverulesep{0pt}\setlength\belowrulesep{0pt}
    \setlength\cmidrulewidth{0.5pt}
\begin{blockarray}{cccc}
 &   \tbinom{k}{2} & k& 1 \\
\begin{block}{c(c|c|c)}
 \tbinom{k}{2}    & C_{11}  & C_{12}   &  C_{13} 
  \\\cmidrule{2-4}
 k&   C_{12}\T     &   C_{22}'  & C_{23}' \\  \cmidrule{2-4}
 1&  C_{13}\T &  (C_{23}')\T  & C_{33}' \\
\end{block}
\end{blockarray} \hspace{-10pt} \phantom{=+} \setlength\aboverulesep{0pt}\setlength\belowrulesep{0pt}
    \setlength\cmidrulewidth{0.5pt}+(n-k)^{-1} \begin{blockarray}{cccc}
 &  \tbinom{k}{2} &   k &  1 \\
\begin{block}{c(c|c|c)}
 \tbinom{k}{2}   & 0 &  0  &  0 \\
  \cmidrule{2-4}
 k&  0   &     C_{22}''  & C_{23}'' \\  
\cmidrule{2-4}
 1&  0    & (C_{23}'')\T  & 2(n-k-1)^{-1}C_{33}'' \\ 
\end{block}
\end{blockarray}  \label{matrixvergelijking}
\end{align}
To decompose the matrix in the second summand, we notice that~$C_{33''} = 2(n-k-1)(y-z) + (x-2y+z)$. Also we have~$C_{22}'' = D_{11}$ and~$C_{23}'' = 2D_{12}$. Therefore:
\begin{align*}
\tfrac{1}{n-k} &\begin{pmatrix}
C_{22}'' & C_{23}''\\
 (C_{23}'')\T& 2 (n-k-1)^{-1} C_{33}''
\end{pmatrix} =\tfrac{1}{n-k}\begin{pmatrix}
D_{11} & 2D_{12}\\
 (2D_{12})\T& 4(y-z)
\end{pmatrix} +\tfrac{1}{\tbinom{n-k}{2}} \begin{pmatrix}
0 & 0\\
 0\T& x-2y+z
\end{pmatrix}
\end{align*}
Assuming~$(U_3^{(n)})\T A^{(n)} U_3^{(n)} \succeq 0$ and~$(U_2^{(n)})\T A^{(n)} U_2^{(n)} \succeq 0$ for all~$n \geq k+4$, this matrix is positive semidefinite. So, assuming~$(U_3^{(n)})\T A^{(n)} U_3^{(n)} \succeq 0$ and~$(U_2^{(n)})\T A^{(n)} U_2^{(n)} \succeq 0$ for all~$n \geq k+4$, the matrix in~\eqref{matrixvergelijking} (and hence~$(U_1^{(n)})\T A^{(n)} U_1^{(n)})$ is positive semidefinite for all~$n\geq k+4$ if and only if the matrix
\begin{align*}
&\setlength\aboverulesep{1pt}\setlength\belowrulesep{1pt}
    \setlength\cmidrulewidth{0.5pt}
\begin{blockarray}{cccc}
 &   \tbinom{k}{2} & k& 1 \\
\begin{block}{c(c|c|c)}
 \tbinom{k}{2}    & C_{11}  & C_{12}   &  C_{13} 
  \\\cmidrule{2-4}
 k&   C_{12}\T     &   C_{22}'  & C_{23}' \\  \cmidrule{2-4}
 1&  C_{13}\T &  (C_{23}')\T  & C_{33}' \\
\end{block}
\end{blockarray} 
\end{align*}
is positive semidefinite (this is again seen by letting~$n \to \infty$). Putting it all together, we find that
\begin{align}
A^{(n)} \succeq 0 \text{ for all $n \geq k+4$}\,\,\,\, \Longleftrightarrow\,\,\, \text{for all $n \geq k+4$}:(U_i^{(n)})\T A^{(n)} U_i^{(n)} \succeq 0 \,\,\forall i \in [3] \,\,\,\,\Longleftrightarrow \notag \\ 
\setlength\aboverulesep{1pt}\setlength\belowrulesep{1pt}
    \setlength\cmidrulewidth{0.5pt}
\begin{blockarray}{cccc}
 &   \tbinom{k}{2} & k& 1 \\
\begin{block}{c(c|c|c)}
 \tbinom{k}{2}    & C_{11}  & C_{12}   &  C_{13} 
  \\\cmidrule{2-4}
 k&   C_{12}\T     &   C_{22}'  & C_{23}' \\  \cmidrule{2-4}
 1&  C_{13}\T &  (C_{23}')\T  & C_{33}' \\
\end{block}
\end{blockarray}  \succeq 0, \begin{blockarray}{ccc}
 &   k&    1 \\
\begin{block}{c(c|c)}
   k& D_{11} &   D_{12}  \\
\cmidrule{2-3}
 1&   D_{12}\T   &  y-z \\  
\end{block}
\end{blockarray} \succeq 0, x-2y + z \geq 0. \label{threematrices2}
\end{align}
Since for~$n \in \N$ with~$k \leq n < k+4$, the matrix~$A^{(n)}$ is a principal submatrix of~$A^{(n')}$ (for any~$n' \geq k+4$) we even have that $A^{(n)} \succeq 0 \text{ for all $n \geq k$}$ if and only if the three matrices in~\eqref{threematrices2} are positive semidefinite.
 \end{proof}

\section{Proof of Propsition~\ref{drieprop}\label{smallpropproof}}
We continue by giving the proof of Proposition~\ref{drieprop}. It states that for all sequences of matrices coming from multigraphs as described in Section~\ref{howtouse}, the value in~\eqref{blockvalue} is nonnegative. Recall that this implies that~$A^{(n')}$ is positive semidefinite for all~$n' \geq k$ if and only if the two block matrices from Theorem~$\ref{Antheorem}$ are positive semidefinite (where ~$(A^{(n')})_{n' \geq k}$ denotes the sequence of matrices corresponding to~$\gamma$).
\begin{proof}[Proof of Proposition \ref{drieprop}]
 Recall that~$(A^{(n')})_{n' \geq k}$ is the sequence of matrices corresponding to~$\gamma$. We aim to prove that
$$
a_{(\{k+1,k+2\},\{k+1,k+2\})} - 2  a_{(\{k+1,k+2\},\{k+1,k+3\})} +  a_{(\{k+1,k+2\},\{k+3,k+4\})} \geq 0.
$$
In this case we have
\begin{align*}
a_{(\{k+1,k+2\},\{k+1,k+2\})} &= 2 b_{\gamma +2 v_{\{k+1,k+2\}}}, \\
a_{(\{k+1,k+2\},\{k+1,k+3\})} &= b_{\gamma +v_{\{k+1,k+2\}}+v_{\{k+1,k+3\}}}, \\
a_{(\{k+1,k+2\},\{k+3,k+4\})} &= b_{\gamma +v_{\{k+1,k+2\}}+v_{\{k+3,k+4\}}}, 
\end{align*}
For any sequence~$\underline{e} = (e_1\ldots,e_d) \in E^d$, we write~$\rho(\underline{e}) :=\prod_{i=1}^d \tfrac{1}{|e_1 \cup \ldots \cup e_i|}$. 
Let~$\underline{e} = (e_1\ldots,e_d)$ be any sequence with~$\alpha_n(\underline{e}) = \gamma + 2 v_{\{k+1,k+2\}}$.  Let~$j_1,j_2 \in [d]$ be the indices with~$e_{j_1} = \{k+1,k+2\}$ and~$e_{j_2} = \{k+1,k+2\}$ .  Let~$\underline{e}'_1$ and~$\underline{e}''_1$ be the sequences obtained from~$\underline{e}$ by replacing~$e_{j_1}$ with~$\{k+1,k+3\}$ and with~$\{k+3,k+4\}$, respectively. Similarly, let~$\underline{e}'_2$ and~$\underline{e}''_2$ be the sequences obtained from~$\underline{e}$ by replacing~$e_{j_2}$ with~$\{k+1,k+3\}$ and with~$\{k+3,k+4\}$, respectively. 
Then
\begin{align}
&a_{(\{k+1,k+2\},\{k+1,k+2\})} - 2  a_{(\{k+1,k+2\},\{k+1,k+3\})} +  a_{(\{k+1,k+2\},\{k+3,k+4\})} \notag
\\&=   2 b_{\gamma +2 v_{\{k+1,k+2\}}} - 2 b_{\gamma +v_{\{k+1,k+2\}}+v_{\{k+1,k+3\}}} +  b_{\gamma +v_{\{k+1,k+2\}}+v_{\{k+3,k+4\}}}     \notag\\  
&=  2 \sum_{\substack{\underline{e} = (e_1\ldots,e_d) \in E^d \\ \alpha_n(\underline{e}) = \gamma + 2 v_{\{k+1,k+2\}}}} \hspace{-10pt}\rho(\underline{e}) -2 \sum_{\substack{\underline{e} = (e_1\ldots,e_d) \in E^d \\ \alpha_n(\underline{e}) = \gamma +  v_{\{k+1,k+2\}} + v_{\{k+1,k+3\}}}} \rho(\underline{e})+ \sum_{\substack{\underline{e} = (e_1\ldots,e_d) \in E^d \\ \alpha_n(\underline{e}) = \gamma +  v_{\{k+1,k+2\}} + v_{\{k+3,k+4\}}}} \hspace{-8pt} \rho(\underline{e}) \notag 
\\&= \sum_{\substack{\underline{e} = (e_1\ldots,e_d) \in E^d \\ \alpha_n(\underline{e}) = \gamma + 2 v_{\{k+1,k+2\}}}} \left( 2\rho(\underline{e}) -2 (\rho(\underline{e}'_1) +\rho(\underline{e}'_2) ) +   \rho(\underline{e}''_1) + \rho(\underline{e}''_2) \right)   \notag 
\\&= 2 \sum_{\substack{\underline{e} = (e_1\ldots,e_d) \in E^d \\ \alpha_n(\underline{e}) = \gamma + 2 v_{\{k+1,k+2\}}}} \left( \rho(\underline{e}) -2 \rho(\underline{e}'_1)  +   \rho(\underline{e}''_1) \right)     \label{termsom}
\end{align}
Now we consider any fixed term in this sum, corresponding to a sequence~$\underline{e} = (e_1\ldots,e_d)$  with~$\alpha_n(\underline{e}) = \gamma + 2 v_{\{k+1,k+2\}}$. 
For~$i \in [d]$, write $\beta_i := \tfrac{1}{|e_1 \cup \ldots \cup e_i|}$. Let~$j:= \max \{j_1,j_2\}$. Then
\begin{align*}
\rho(\underline{e}) &=  \prod_{i=1}^d \tfrac{1}{\beta_i}, \quad \rho(\underline{e}'_1) =\rho(\underline{e}'_2) =  \prod_{i=1}^{j-1}  \tfrac{1}{\beta_i}\prod_{i=j}^{d}  \tfrac{1}{\beta_i+1}, \quad
\rho(\underline{e}''_1) =\rho(\underline{e}''_2)=  \prod_{i=1}^{j-1}  \tfrac{1}{\beta_i}\prod_{i=j}^{d}  \tfrac{1}{\beta_i+2}, \\ 
\end{align*}
where the equalities for~$ \rho(\underline{e}'_1)$ and~$ \rho(\underline{e}''_1) $ follow since~$e_i \subseteq [k]$ for each~$i \neq j_1,j_2$.  We compute
\begin{align*}
\rho(\underline{e}) -2 \rho(\underline{e}'_1)  +   \rho(\underline{e}''_1) &=
\prod_{i=1}^d \tfrac{1}{\beta_i} - 2 \prod_{i=1}^{j-1} \tfrac{1}{\beta_i}\prod_{i=j}^d \tfrac{1}{(\beta_i + 1) }+\prod_{i=1}^{j-1} \tfrac{1}{\beta_i} \prod_{i=j}^d \tfrac{1}{(\beta_i + 2) }
\\ &= \prod_{i=1}^{j-1} \tfrac{1}{\beta_i} \left(  \prod_{i=j}^d \tfrac{1}{\beta_i} - 2 \prod_{i=j}^d \tfrac{1}{(\beta_i + 1 ) }+ \prod_{i=j}^d \tfrac{1}{(\beta_i + 2) } \right) 
\\ & \geq  \prod_{i=1}^{j-1} \tfrac{1}{\beta_i} \left(   2 \sqrt{\prod_{i=j}^d \tfrac{1}{\beta_i} \prod_{i=j}^d \tfrac{1}{(\beta_i + 2) }}  - 2 \prod_{i=j}^d \tfrac{1}{(\beta_i + 1 ) }      \right) 
\\ & \geq  \prod_{i=1}^{j-1} \tfrac{1}{\beta_i} \left(   2 \sqrt{\prod_{i=j}^d \tfrac{1}{(\beta_i + 1)^2 }}  - 2 \prod_{i=j}^d \tfrac{1}{(\beta_i + 1 ) }      \right) 
= 0,
\end{align*}
where the first inequality follows from the inequality of arithmetic-geometric means. So every term in the sum in~\eqref{termsom} is nonnegative. This proves the proposition.
\end{proof}

\section{Computer verification of Theorem~\ref{theorem2}\label{computer}}
We report on the computational results. Recall that to verify Theorem~\ref{theorem2}, we perform the steps of Procedure~\ref{procver}, for each~$ d \leq 9$. We repeat these steps with comments about the computations and present a table with computational results.
\begin{enumerate}[label=(\roman*)]
\item We enumerate (up to isomorphism)  all multigraphs with exactly~$(d-2)$ edges without isolated vertices. To do this we used the programs \texttt{geng}  and \texttt{multig} from~\cite{dreadnaut}. 
\item\label{sequenced2} For each multigraph~$\underline{e}= (e_1,\ldots,e_{d-2})$ from the previous step, we set~$k:= |\cup_{i=1}^{d-2} e_i|  \leq 2(d-2)$. We relabel the vertices such that~$e_i \subseteq [k]$ for each~$i \in [d-2]$.
\item\label{sequenced3} For each~$n \geq k$, set~$\gamma(n) := \alpha_n(\underline{e})\in \N_{d-2}^{\binom{n}{2}}$ and define~$A^{(n)} := Q_{\gamma(n)}$. We verify that~$A^{(n)}$ is positive semidefinite for each~$n \geq k$ by verifying that the three  block matrices given in Theorem~\ref{Antheorem} (which are constructed from the matrix~$A^{(k+4)}$) are positive semidefinite.

In the computations for Table~\ref{overview} below  we used each time  the sequence $A^{(n)} :=\gamma(n)! Q_{\gamma(n)}$ instead of $A^{(n)} := Q_{\gamma(n)}$, to cancel the factor $1/\gamma(n)!$ in the definition of~$Q_{\gamma(n)}$ (cf.~\eqref{qgammadef2}). This amounts to multiplying the sequence by a constant positive integer, as~$\gamma(n)!$ does not depend on~$n$, only on the edges in the multigraph~$\underline{e}$. 
\end{enumerate}
After performing these steps, we know that $Q_{\gamma(n)}$ is positive semidefinite for each~$n \geq k$ and for each multigraph, hence~$H(f_d) \succeq 0$, hence~$f_d$ is convex (for the considered fixed~$d$, but for all~$n$). 

In Table~\ref{overview} we show for fixed~$d$ the number of multigraphs with exactly~$d-2$ edges (up to isomorphism). Also we show the minimum of all eigenvalues obtained after computing for each of these multigraphs, for the sequence of matrices~$(A^{(n)})_{n\geq k}$ as defined in~\ref{sequenced3}, the eigenvalues of the three block matrices from Theorem~\ref{Antheorem} (which are constructed from the matrix~$A^{(k+4)}$).

\begin{table}[ht]
\begin{center}
    \begin{tabular}{| r | r ||  >{\bfseries} r |}
    \hline
    $d$ & multigraphs & $\lambda_{\text{min}}$   \\\hline 
 3 & 1 & 0.00357563 \\
4 & 3 & 0.00059703 \\
5 & 8 & 0.00015202 \\
6 & 23 &0.00004653 \\ 
7 &  66& 0.00001583 \\
8 & 212 & 0.00000576 \\
9 & 686 & 0.00000220 \\\hline                      
    \end{tabular}
\end{center}
  \caption{\small For each degree~$d$, we display the number of multigraphs  with~$d-2$ edges and the minimum of all eigenvalues found with the procedure explained above. \label{overview}}
\end{table} 
For a fixed degree~$d$, the largest~$k$ occurring in~\ref{sequenced2} is~$k=2(d-2)$. The largest block matrix from Theorem~\ref{Antheorem} has order~$\tbinom{k}{2}+k+1$. To also verify the theorem for~$d=10$, neither the computation of the eigenvalues of such a matrix, nor the enumeration of the multigraphs (for~$d=10$ there are $2389$ multigraphs on at most~$d-2=8$ edges) were limiting factors. The limiting factor was the computation of the entries~$\widehat{b}_{\gamma(k+4)+v_i+v_j}$ of the matrix~$\gamma(k+4)!Q_{\gamma(k+4)}$ (cf.~\eqref{qgammadef2}), in combination with the fact that one must compute for each multigraph many of these entries. It is plausible that with more computation time and better optimizations, also convexity of~$f_d$ for~$d=10$ can be verified. Since such a verification does not give more insight to prove the theorem for all~$d$ and the method is now clearly illustrated, we stopped at~$d=9$. 

\subsection*{Experimental observations}
For each~$d \leq 9$, the minimum of all eigenvalues found with the procedure described in the previous section is attained for the multigraph consisting of disjoint edges, as our computer experiments demonstrated (with the sequence $A^{(n)} :=\gamma(n)! Q_{\gamma(n)}$). It would be interesting to know if this holds for all~$d$. If this is proved to be true, it is not necessary to check for each~$d$ all nonisomorphic multigraphs on~$d-2$ edges anymore, as then only the single multigraph consisting of $d-2$ disjoint edges must be checked. 

During the experiments it was observed that for sequences~$(A^{(n)})_{n \geq k}$ coming from multigraphs as in step~\ref{sequenced3} above, the minimum eigenvalue seems to be attained at the matrix block of size $\tbinom{k}{2}+k+1$. Perhaps such a monotonicity property can be proved in general for sequences~$(A^{(n)})_{n \geq k}$ coming from a multigraph. (This block does not have the minimum eigenvalue for general sequences $(A^{(n)})_{n \geq k}$. This can be seen from the example in Remark~\ref{k0remark} with e.g.,~$(x,y,z)=(6,4,3)$.)

Finally, note that the upper left principal submatrix of size~$\tbinom{k}{2}$ of the  block of size~$\tbinom{k}{2}+k+1$  from Theorem~\ref{Antheorem} does not have symmetry in general: it is heavily dependent on the chosen multigraph and the rows and columns of this part are indexed by the edges of the chosen multigraph. We did not apply any symmetry reduction to this part. For a given multigraph, the automorphism group of the multigraph may be used to further reduce the problem. While this may give  a reduction for fixed multigraphs, it does not appear to remove the dependency on~$d$ in general. A new idea might be required to prove that~$f_d$ is convex for all~$d$.

\section{Possible extensions\label{extensions}}

It might be possible to generalize Theorem~\ref{Antheorem} to edge sizes~$L >2$. A representative set can be derived using the representation theory of the symmetric group (for~$k=0$, one can use that~$\C^{E}$ as an~$S_n$-module is in this case isomorphic to the permutation module~$M^{(n-L,L)}$, where the notation is as in Sagan~\cite{sagan}). To prove that the whole sequence of matrices is positive semidefinite, a necessary condition of positive semidefiniteness of certain constant matrices can probably be derived by considering the coefficients before the highest powers of~$n$ in the matrices. However, it is not immediately clear how to prove sufficiency,  and we leave this to further research. The present paper focuses on the ``power-of-two-model'', the case~$L=2$, which was the case Cardinaels, Borst and Van Leeuwaarden were primarily interested in (in the light-traffic regime). 

The current check of positive semidefiniteness of the constant matrices is numerical and done in matlab.\footnote{See the supplementary material attached to the arXiv submission.} Since the constant matrices are relatively small and their eigenvalues are considerably larger than zero, we concluded that they are positive definite. As suggested by an anonymous referee, explicit rational certificates of positive (semi)definiteness can be given. We tried this for small~$d$, but the  matlab code to generate the constant matrices exactly (using rational computations) is slow.  The minimum entry on the diagonal of~$D$ in an~$LDL\T$-decomposition of all constant matrices considered for~$d=3$ is $\tfrac{13}{2360}$, for $d=4$ it is $\tfrac{17}{21232}$, and for~$d=5$ it is $\tfrac{2341}{12369056}$. 

An anonymous referee noted that whereas deciding convexity in general is NP-hard, it follows from an article by G\"orlach et al.\ \cite{gorlach} that checking for convexity for symmetric polynomials of fixed degree can be done in a time polynomial in the number of variables. In this paper, the convexity check is done in time exponential in~$d$, but fully independent of~$n$. The method presented here can also be used for other (classes of) polynomials in~$\tbinom{n}{2}$ variables, given that the Hessian can be decomposed as a sum of matrices invariant under~$S_{n-k}$ for some fixed~$k$. The present work only focuses on the particular question about~$f_d$.

Other possible extensions follow from the experimental observations. To prove the conjecture for all~$d$, it may help to first show that the matrix~$B_1$ corresponding to the matching (the multigraph consisting of~$d-2$ disjoint edges) has the smallest eigenvalue. Then only this matrix must be checked, which possibly opens a way to a full proof of the conjecture.

\setcounter{equation}{0}
\setcounter{figure}{0}
\setcounter{theorem}{0}
\setcounter{section}{0}
\appendix
\section{Appendix}
\subsection{Explicit matrices for the case \texorpdfstring{$d= 3$}{d =3}}\label{d3verify}
First we consider~$d=3$. There is only one multigraph consisting of $d-2=1$ edge, namely the multigraph~$\underline{e}= (\{1,2\})$. Set~$k=2$, and define for~$n \in \N$ the sequence~$\gamma(n) := \alpha_n(\underline{e})$. We write down the value and the two block matrices obtained from~$A^{(k+4)} = A^{(6)}=1!Q_{\gamma(6)}$ from Theorem~\ref{Antheorem}.  By Example~\ref{examplegamma}, we have
$$
A^{(6)}_{i,j} = \frac{1}{|e_1 \cup i \cup j|}\left( \frac{1}{|e_1\cup i|} + \frac{1}{|e_1\cup j|} + \frac{1}{|i\cup j |}\right), \quad \text{ for } i,j \in \tbinom{[6]}{2}.
$$
The block matrices from Theorem~\ref{Antheorem} are constructed from this matrix.
The value of~\eqref{blockvalue} is~$\tfrac{1}{24} \geq 0$, and the two matrices are
\begin{align*} B_1 :=
\begin{pmatrix} 
\tfrac{   3}{4} & \tfrac{ 7}{18} & \tfrac{ 7}{18} & \tfrac{1}{4} \\
\tfrac{ 7}{18} & \tfrac{1}{4} & \tfrac{11}{48} & \tfrac{1}{6} \\
\tfrac{ 7}{18} & \tfrac{11}{48} & \tfrac{1}{4} & \tfrac{1}{6} \\
\tfrac{  1}{4} & \tfrac{  1}{6} & \tfrac{  1}{6} & \tfrac{1}{8} \\
\end{pmatrix}, \,\,\, B_2 :=\begin{pmatrix} 
\tfrac{   5}{36} & \tfrac{5}{48} & \tfrac{1}{16} \\
\tfrac{ 5}{48} & \tfrac{  5}{36} & \tfrac{1}{16} \\
\tfrac{   1}{16} & \tfrac{  1}{16} & \tfrac{  1}{24} \\
\end{pmatrix}.
\end{align*}
Both matrices are positive (semi)definite: we have~$\lambda_{\text{min}}(B_1)\approx 0.00357563$ and $\lambda_{\text{min}}(B_2)\approx 0.00837652$. 

\subsection{Explicit matrices for the case \texorpdfstring{$d=4$}{d =4}}
Next, we consider~$d=4$. There are 3 distinct multigraphs  up to isomorphism consisting of $d-2=2$ edges, namely the multigraphs $\underline{e}=(e_1,e_2)$ where~$e_1=\{1,2\}$ and~$e_2=\{1,2\}$, $\{1,3\}$ or $\{3,4\}$, respectively.

\subsubsection{The case \texorpdfstring{$e_1 = \{1,2\}$, $e_2 = \{1,2\}$}{e1 = e2 = 12}}

Let~$k=2$, and let~$e_1=\{1,2\}$ and~$e_2=\{1,2\}$, and~$\underline{e}=(e_1,e_2)$. Define for~$n \in \N$ the sequence~$\gamma(n) := \alpha_n(\underline{e})$. We compute~$A^{(k+4)} = 2! Q_{\gamma(k+4)}$ and then verify that the constant matrices from Theorem~\ref{Antheorem} are positive semidefinite with the computer. The value of~\eqref{blockvalue} is~$\tfrac{1}{24} \geq 0$, and the two matrices are
\begin{align*}
B_1 := 2\begin{pmatrix} 
\tfrac{   3}{4} & \tfrac{ 23}{72} & \tfrac{ 23}{72} & \tfrac{3}{16} \\
\tfrac{ 23}{72} & \tfrac{23}{144} & \tfrac{89}{576} & \tfrac{7}{72} \\
\tfrac{ 23}{72} & \tfrac{89}{576} & \tfrac{23}{144} & \tfrac{7}{72} \\
\tfrac{  3}{16} & \tfrac{  7}{72} & \tfrac{  7}{72} & \tfrac{1}{16} \\
\end{pmatrix}, \,\,\,\, B_2 := 2\begin{pmatrix} 
\tfrac{   43}{432} & \tfrac{149}{1728} & \tfrac{23}{576} \\
\tfrac{ 149}{1728} & \tfrac{  43}{432} & \tfrac{23}{576} \\
\tfrac{   23}{576} & \tfrac{  23}{576} & \tfrac{  1}{48} \\
\end{pmatrix}.
\end{align*}
Both matrices are positive (semi)definite: we have~$\lambda_{\text{min}}(B_1) \approx 0.00192960$ and $\lambda_{\text{min}}(B_2)\approx 0.00670304$.

\subsubsection{The case \texorpdfstring{$e_1 = \{1,2\}$, $e_2 = \{1,3\}$}{e1 = 12, e2=13}}
Let~$k=3$, and let~$e_1=\{1,2\}$ and~$e_2=\{1,3\}$, and~$\underline{e}=(e_1,e_2)$.  Define for~$n \in \N$ the sequence~$\gamma(n) := \alpha_n(\underline{e})$. We compute~$A^{(k+4)} = Q_{\gamma(k+4)}$ and then verify that the constant matrices from Theorem~\ref{Antheorem} are positive semidefinite with the computer. The value of~\eqref{blockvalue} is~$1/36 \geq 0$, and the two matrices are
\begin{align*}B_1 := 
\begin{pmatrix} 
  \tfrac{23}{36} & \tfrac{ 14}{27} & \tfrac{13}{27} & \tfrac{23}{72} & \tfrac{89}{288} & \tfrac{79}{288} & \tfrac{ 7}{36} \\
  \tfrac{14}{27} & \tfrac{ 23}{36} & \tfrac{13}{27} & \tfrac{23}{72} & \tfrac{79}{288} & \tfrac{89}{288} & \tfrac{ 7}{36} \\
  \tfrac{13}{27} & \tfrac{ 13}{27} & \tfrac{13}{27} & \tfrac{25}{96} & \tfrac{ 25}{96} & \tfrac{ 25}{96} & \tfrac{  1}{6} \\
  \tfrac{23}{72} & \tfrac{ 23}{72} & \tfrac{25}{96} & \tfrac{  1}{5} & \tfrac{  7}{40} & \tfrac{  7}{40} & \tfrac{  1}{8} \\
 \tfrac{89}{288} & \tfrac{79}{288} & \tfrac{25}{96} & \tfrac{ 7}{40} & \tfrac{  7}{40} & \tfrac{19}{120} & \tfrac{11}{96} \\
 \tfrac{79}{288} & \tfrac{89}{288} & \tfrac{25}{96} & \tfrac{ 7}{40} & \tfrac{19}{120} & \tfrac{  7}{40} & \tfrac{11}{96} \\
   \tfrac{7}{36} & \tfrac{  7}{36} & \tfrac{  1}{6} & \tfrac{  1}{8} & \tfrac{ 11}{96} & \tfrac{ 11}{96} & \tfrac{ 1}{12} \\
\end{pmatrix}, \,\,\, B_2 := \begin{pmatrix} 
 \tfrac{ 43}{360} & \tfrac{  41}{480} & \tfrac{  41}{480} & \tfrac{ 1}{20} \\
 \tfrac{ 41}{480} & \tfrac{143}{1440} & \tfrac{  17}{240} & \tfrac{7}{160} \\
 \tfrac{ 41}{480} & \tfrac{  17}{240} & \tfrac{143}{1440} & \tfrac{7}{160} \\
 \tfrac{   1}{20} & \tfrac{   7}{160} & \tfrac{   7}{160} & \tfrac{ 1}{36} \\
\end{pmatrix}.
\end{align*}
Both matrices are positive (semi)definite: we have~$\lambda_{\text{min}}(B_1) \approx 0.00101380$ and $\lambda_{\text{min}}(B_2)\approx  0.00384022$.

\subsubsection{The case \texorpdfstring{$e_1 = \{1,2\}$, $e_2 = \{3,4\}$}{e1 = 12, e2 = 34}}
Let~$k=4$, and let~$e_1=\{1,2\}$ and~$e_2=\{3,4\}$, and~$\underline{e}=(e_1,e_2)$. Define for~$n \in \N$ the sequence~$\gamma(n) := \alpha_n(\underline{e})$. We compute~$A^{(k+4)} = Q_{\gamma(k+4)}$ and then verify that the  constant matrices from Theorem~\ref{Antheorem} are positive semidefinite with the computer. The value of~\eqref{blockvalue} is~$\tfrac{1}{48} \geq 0$, and the two matrices are
\setcounter{MaxMatrixCols}{20}
{\footnotesize \begin{align*}
B_1 := \begin{pmatrix} 
 \tfrac{    3}{8} & \tfrac{79}{288} & \tfrac{79}{288} & \tfrac{79}{288} & \tfrac{79}{288} & \tfrac{   1}{4} & \tfrac{  7}{36} & \tfrac{  7}{36} & \tfrac{   1}{6} & \tfrac{   1}{6} & \tfrac{  1}{8} \\
 \tfrac{ 79}{288} & \tfrac{89}{288} & \tfrac{ 25}{96} & \tfrac{ 25}{96} & \tfrac{ 11}{48} & \tfrac{79}{288} & \tfrac{  7}{40} & \tfrac{19}{120} & \tfrac{  7}{40} & \tfrac{19}{120} & \tfrac{11}{96} \\
 \tfrac{ 79}{288} & \tfrac{ 25}{96} & \tfrac{89}{288} & \tfrac{ 11}{48} & \tfrac{ 25}{96} & \tfrac{79}{288} & \tfrac{  7}{40} & \tfrac{19}{120} & \tfrac{19}{120} & \tfrac{  7}{40} & \tfrac{11}{96} \\
 \tfrac{ 79}{288} & \tfrac{ 25}{96} & \tfrac{ 11}{48} & \tfrac{89}{288} & \tfrac{ 25}{96} & \tfrac{79}{288} & \tfrac{19}{120} & \tfrac{  7}{40} & \tfrac{  7}{40} & \tfrac{19}{120} & \tfrac{11}{96} \\
 \tfrac{ 79}{288} & \tfrac{ 11}{48} & \tfrac{ 25}{96} & \tfrac{ 25}{96} & \tfrac{89}{288} & \tfrac{79}{288} & \tfrac{19}{120} & \tfrac{  7}{40} & \tfrac{19}{120} & \tfrac{  7}{40} & \tfrac{11}{96} \\
 \tfrac{    1}{4} & \tfrac{79}{288} & \tfrac{79}{288} & \tfrac{79}{288} & \tfrac{79}{288} & \tfrac{   3}{8} & \tfrac{   1}{6} & \tfrac{   1}{6} & \tfrac{  7}{36} & \tfrac{  7}{36} & \tfrac{  1}{8} \\
 \tfrac{   7}{36} & \tfrac{  7}{40} & \tfrac{  7}{40} & \tfrac{19}{120} & \tfrac{19}{120} & \tfrac{   1}{6} & \tfrac{   1}{8} & \tfrac{ 11}{96} & \tfrac{   1}{9} & \tfrac{   1}{9} & \tfrac{ 1}{12} \\
 \tfrac{   7}{36} & \tfrac{19}{120} & \tfrac{19}{120} & \tfrac{  7}{40} & \tfrac{  7}{40} & \tfrac{   1}{6} & \tfrac{ 11}{96} & \tfrac{   1}{8} & \tfrac{   1}{9} & \tfrac{   1}{9} & \tfrac{ 1}{12} \\
 \tfrac{    1}{6} & \tfrac{  7}{40} & \tfrac{19}{120} & \tfrac{  7}{40} & \tfrac{19}{120} & \tfrac{  7}{36} & \tfrac{   1}{9} & \tfrac{   1}{9} & \tfrac{   1}{8} & \tfrac{ 11}{96} & \tfrac{ 1}{12} \\
 \tfrac{    1}{6} & \tfrac{19}{120} & \tfrac{  7}{40} & \tfrac{19}{120} & \tfrac{  7}{40} & \tfrac{  7}{36} & \tfrac{   1}{9} & \tfrac{   1}{9} & \tfrac{ 11}{96} & \tfrac{   1}{8} & \tfrac{ 1}{12} \\
 \tfrac{    1}{8} & \tfrac{ 11}{96} & \tfrac{ 11}{96} & \tfrac{ 11}{96} & \tfrac{ 11}{96} & \tfrac{   1}{8} & \tfrac{  1}{12} & \tfrac{  1}{12} & \tfrac{  1}{12} & \tfrac{  1}{12} & \tfrac{ 1}{16} \\
\end{pmatrix}, \,\,B_2 := \begin{pmatrix} 
 \tfrac{   5}{72} & \tfrac{  5}{96} & \tfrac{17}{360} & \tfrac{17}{360} & \tfrac{1}{32} \\
 \tfrac{   5}{96} & \tfrac{  5}{72} & \tfrac{17}{360} & \tfrac{17}{360} & \tfrac{1}{32} \\
 \tfrac{ 17}{360} & \tfrac{17}{360} & \tfrac{  5}{72} & \tfrac{  5}{96} & \tfrac{1}{32} \\
 \tfrac{ 17}{360} & \tfrac{17}{360} & \tfrac{  5}{96} & \tfrac{  5}{72} & \tfrac{1}{32} \\
 \tfrac{   1}{32} & \tfrac{  1}{32} & \tfrac{  1}{32} & \tfrac{  1}{32} & \tfrac{1}{48} \\
\end{pmatrix}.
\end{align*}}Both matrices are positive (semi)definite: we have~$\lambda_{\text{min}}(B_1) \approx 0.00059703$ and $\lambda_{\text{min}}(B_2)\approx  0.00253196$.

\section*{Acknowledgements}
 The author thanks Monique Laurent for explaining this problem to him, for suggesting that a symmetry reduction can be applied, and for valuable discussions and comments. Also the author wants to thank Daniel Brosch and Andries Steenkamp for useful discussions.

Moreover, the author thanks the anonymous referees and the associate editor for their careful reading and good suggestions to improve the content and the presentation of this paper.

\end{document}